\newcommand*\circled[1]{\tikz[baseline=(char.base)]{
    \node[shape=circle,draw,inner sep=2pt] (char) {#1};}}
\newcommand{\N}{\mathbb{N}}
\declaretheorem[name=Theorem]{thm}
\newtheorem{lemma}[thm]{Lemma}
\newtheorem{corollary}[thm]{Corollary}
\newtheorem{theorem}[thm]{Theorem}
\newtheorem{observation}[thm]{Observation}
\newtheorem{problem}{Problem}
\theoremstyle{definition}
\newtheorem{example}[thm]{Example}
\newcommand{\remove}[1]{}
\renewcommand{\epsilon}{\varepsilon}
\title{Counting Houses of Pareto Optimal Matchings\\  
in the House Allocation Problem}
\author{Andrei Asinowski \and Bal\'{a}zs Keszegh \and Tillmann Miltzow}
\date{\vspace{-5ex}}
\begin{document}

\maketitle

\begin{abstract}
In an instance of the house allocation problem,
 two sets $A$ and $B$ are given. The set $A$ is referred to as \emph{applicants} and the set $B$ is referred to as \emph{houses}. We denote by $m$ and $n$ the size of $A$ and $B$ respectively.  In the house allocation problem, we assume that every
 \emph{applicant} $a\in A$ has a preference list over the set of houses $B$.
We call an injective mapping $\tau$ from $A$ to $B$ a matching. A \emph{blocking
 coalition} of $\tau$ is a non-empty subset $A'$ of $A$ such that there exists a matching
  $\tau'$ that differs from $\tau$ only on elements of $A'$, and every element of $A'$ improves in $\tau'$, compared to $\tau$, according to its preference
 list. If there exists no blocking coalition, we call the matching $\tau$ a
  \emph{Pareto optimal matching} (POM).

A house $b\in B$ is {\em reachable} if there exists a Pareto optimal matching using
 $b$. The set of all reachable houses is denoted by $E^*$. We show
  \[|E^*| \leq  \sum_{i = 1,\ldots, m}{\left\lfloor\frac{m}{i}\right\rfloor} = \Theta(m\log m).\]
 This is asymptotically tight. A set $E\subseteq B$ is \emph{reachable} (respectively
 \emph{exactly reachable})  if there exists a Pareto optimal matching $\tau$ whose
  image contains  $E$ as a subset (respectively equals $E$). We give bounds for the
   number of exactly reachable sets.
We find that our results hold in the more general setting of multi-matchings, when each applicant $a$ of $A$ is matched with $\ell_a$ elements of $B$ instead of just one. Further, we give complexity results and algorithms for corresponding algorithmic questions. Finally, we characterize {\em unavoidable} houses, i.e., houses that are used by all POMs.
We obtain efficient algorithms to determine all unavoidable elements.
\end{abstract}

\section{Introduction}

\subsection{Definitions}

The house allocation problem is motivated by the following setup: a set of people wish to be allocated to a certain set of houses. Each person ranks the set of houses and wants to be assigned to the house with her highest preference. As soon as two people have the same favorite house this is not possible. Motivated by this picture we generalize the 
setup. 
We start with some definitions.

In an instance of the house allocation problem, 
two sets $A$ and $B$ are given. The set $A$ represents applicants and the set $B$ represents houses.
We denote by $m$ and $n$ the size of $A$ and $B$ respectively.  In the house allocation problem, we assume that every $a\in A$ has a preference list over the set $B$. A preference list can be formally defined as a total order 
on $B$.
We call an injective mapping $\tau$ from $A$ to $B$ a {\it matching}. A \emph{blocking
 coalition} of $\tau$ is a non-empty subset $A'$ of $A$ such that there exists a matching
  $\tau'$ that differs from $\tau$ only on elements of $A'$, and every element of $A'$ improves in $\tau'$, compared to $\tau$ according to its preference
 list. If there exists no blocking coalition, $\tau$ is called a
  \emph{Pareto optimal matching} (POM).


%
%


We represent the preference lists by an $m\times n$ matrix. Every row represents the preference list of one of the applicants in $A$, i.e., in a given row $r$ corresponding to some applicant $a\in A$, the leftmost house is the one that $a$ prefers most, etc.,
that is house $b_1$ is left of $b_2$ in $r$ if and only if $a$ prefers $b_1$ over $b_2$.
Note that no row contains an element from $B$ twice. We usually denote this matrix by $M$ and following this interpretation we usually denote the members of $A$ by $r_1,r_2,\dots r_m$ and the members of $B$ by $1,2,\dots,n$. Because of this matrix representation, we usually refer to members of $A$ only as rows and to members of $B$ as elements (of the matrix).

To illustrate the notion consider the following matrix and observe that the matching indicated by circles is indeed Pareto optimal.
\[
\left(
\begin{array}{ccccc}
        \circled{1} & 5 & 3 & 2 & 4 \\
    3 & 1 & \circled{4} & 5 & 2 \\
    1 & \circled{3} & 5 & 4 & 2 \\
\end{array}
\right)
\]
We denote tuples $p=(a,k)$ as positions of the matrix, if $a$ is some row and $k$ is some natural number. A matching corresponds to a set of positions $P_\tau$ if there is exactly one position for each row and no two positions contain the same house.
The image set of $\tau$ corresponds to the set of houses of $B$ in these positions. Thus, we say that $\tau$ \emph{selects}
some position $p$ of $M$ (resp.\ some element $b$ of $B$),  if $p$ is in $P_\tau$
(resp.\ $b$ is in the image set of $\tau$).
Similarly, given some matching $\tau$ we say that a row $a$  \emph{selects}  a position $p$ in row $a$ (resp.\ element $b$) if $p=(a,k)\in P_\tau$ for some $k$ (resp.\ $b$ is in the image of $\tau$).
We denote by $s(\tau)$ the image set of $\tau$.

In a POM
the positions after the $m$-th column will never be assigned, because at least one of the previous $m$ elements in that row is preferred and not assigned to any other element on $A$. Therefore it is sufficient to consider only $m\times m$ square matrices.
(In other words, only the first $m$ elements of the preference lists matter.)

If some POM $\tau$ selects $p$ (resp.\ $b$), then it is a {\em reachable} position (resp.\ {\em reachable} element). More generally, a set $E \subseteq B$ is \emph{(exactly) reachable} if there exists a POM $\tau$ with $E\subseteq s(\tau)$ ($E = s(\tau)$). In this case we also say that $\tau$ reaches $E$.
An element $b$ is \emph{unavoidable} if it belongs to the set $s(\tau)$ for every  Pareto optimal matching $\tau$ of $M$ and $b$ is called {\em avoidable} if there exists a Pareto optimal matching $\tau$ with $b\notin s(\tau)$. A set $E$ is {\em avoidable} if there exists a POM $\tau$ with $s(\tau)\cap E = \varnothing$.
Note that for a set $E$ such that $|E|=m$ it is exactly reachable if and only if $B\setminus E$ is avoidable.
These notions can be generalized naturally to the case of {\em multi-matchings}, when each element $a$ has to be paired up with $\ell_a\ge 1$ elements of $B$. For the precise definitions see Section~\ref{sec:esmm}.
 These {\em Pareto Optimal Multi-matchings (POMMs)} can also appear naturally in practical applications and we will see that our results about POMs generalize naturally to POMMs.
We will also study matrices with fewer than $m$ columns, precise definitions will be given in Subsection~\ref{sibsec:prelim}. In this case preference lists are incomplete, i.e., it can happen that some elements of $A$ are not assigned.
(In this case, it might be interesting to compute a POM of maximum size.)

\begin{example}
To illustrate the notions and to avoid confusion,
we give here a detailed example. The reader can use it
to verify that she understood all important notions.
The example can be skipped, as we will not refer to it again.
Consider the following matrix.
\[
M = \left(
\begin{array}{cccc}
        1 & 5 & 3 & 2  \\
	3 & 1 & 5 & 4  \\
	1 & 6 & 5 & 4  \\
	3 & 6 & 2 & 4  \\
\end{array}
\right)
\]

The elements $1$ and $3$ are unavoidable, as they are both
in the first column of $M$ and thus picked by every POM.
A quick check reveals that every other element of $M$ can
be avoided.
Each set $E$ with $1\in E$ or $3\in E$ is also
unavoidable. A simple argument reveals that
every set $E \subseteq \{2,4,5,6\}$ with $|E| \geq 3$ is
unavoidable, because every POM picks $4$ elements of $M$.
In order to determine all unavoidable sets,
there remain only six sets $E \subset \{2,4,5,6\}$  of size $2$ to
be examined. It turns out that only  $\{5,6\}$ and $\{2,5\}$
are unavoidable.

It is easy to see that all elements of $M$ are reachable.
In order to specify the reachable sets, note that if some
set $E$ is reachable then are all its subsets as well.
Thus it is sufficient to specify all reachable sets of size $4$.
Also note that if $E \subseteq \{1,2,3,4,5,6\}$ of size four
is reachable then $D = \{1,2,3,4,5,6\} \setminus E$
is avoidable. Thus, by the discussion above, the
four reachable sets of size four are exactly
\[ \{1,3,4,5\}, \{1,3,2,5\}, \{1,3,2,6\}, \{1,3,5,6\}. \]

\end{example}

\subsection{Results}

\subsubsection{Enumerating reachable elements and sets}
In Section \ref{reachable} we deal with enumerative problems related to reachable elements. Our main result here is the following.
\begin{restatable}{thm}{thmElements}\label{thm:elements}
Let $M$ be an $m\times m$ matrix and $E^*$ be the set of all reachable elements. Then \[\left| E^* \right| \leq \sum_{i=1}^{m}{\left\lfloor m/i\right\rfloor} \leq m (\ln m +1) .\]
\end{restatable}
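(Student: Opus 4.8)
The plan is to bound $|E^*|$ by assigning to each reachable element a POM that selects it, and then counting how many distinct elements such POMs can "newly" contribute, grouped by the column in which they are selected. First I would fix, for every reachable element $b$, a POM $\tau_b$ with $b \in s(\tau_b)$, and let $c_b$ be the column index such that $b$ sits at position $(r, c_b)$ for the row $r$ with $\tau_b(r) = b$. The key structural fact I want to exploit is the defining property of a POM: if a row $r$ selects an element in column $k$, then each of the $k-1$ elements appearing earlier in row $r$ is selected by $\tau_b$ in some other row (otherwise $\{r\}$, or a short "improving cycle", would be a blocking coalition). So a POM that places some row at column $k$ is forced to also use $k-1$ other elements, each of which is "higher preference" for some row.

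Next I would set up the counting. Let $E^*_i$ be the set of reachable elements $b$ for which the chosen witness $\tau_b$ selects $b$ in column exactly $i$ (so $E^* = \bigcup_{i=1}^m E^*_i$, and it suffices to show $|E^*_i| \le \lfloor m/i \rfloor$). Fix $i$ and consider two elements $b, b' \in E^*_i$, selected by rows $r, r'$ respectively. The idea is that the $i-1$ forced "earlier" elements in row $r$ under $\tau_b$, together with $b$ itself, form a set of $i$ elements all selected by $\tau_b$; I want to argue these "blocks" of size $i$ are essentially disjoint across distinct elements of $E^*_i$, which would immediately give $|E^*_i| \cdot i \le m$, i.e. $|E^*_i| \le \lfloor m/i\rfloor$. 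To make the disjointness work cleanly, I expect one needs a more careful choice of witnesses than an arbitrary one: e.g. choose $\tau_b$ to select $b$ in the \emph{smallest possible} column over all POMs using $b$, and among those, minimize some potential (such as the total column-sum). Then a swapping/exchange argument should show that if two elements of $E^*_i$ shared one of these forced earlier elements, one could produce a POM using one of them at a strictly smaller column or with strictly smaller potential, contradicting minimality.

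Summing over $i$ then yields $|E^*| \le \sum_{i=1}^m \lfloor m/i \rfloor$, and the second inequality $\sum_{i=1}^m \lfloor m/i\rfloor \le \sum_{i=1}^m m/i = m H_m \le m(\ln m + 1)$ is the standard harmonic-number estimate.

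The main obstacle I anticipate is precisely the disjointness (or near-disjointness) argument for the size-$i$ blocks: POMs witnessing different elements of $E^*_i$ need not be the same matching, so the $i-1$ "forced earlier elements" live in different matchings and could a priori overlap heavily. The resolution should be an exchange argument: given two witnesses $\tau_b, \tau_{b'}$ that overlap in a forbidden way, splice them along an alternating path or cycle to obtain a new POM contradicting the extremal choice of one of the witnesses. Getting the right potential function and verifying that the spliced object is still a POM (no blocking coalition is created) is the delicate part; everything else is bookkeeping and the harmonic sum.
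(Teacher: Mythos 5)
Your central lemma --- that the number of reachable elements whose (minimal) selecting column is $i$ satisfies $|E^*_i| \le \lfloor m/i\rfloor$ --- is false, so the decomposition by columns cannot work. Take the $2\times 2$ matrix with rows $(1\ 2)$ and $(1\ 3)$. The elements $2$ and $3$ each appear only in column $2$ and each is reachable (via the POMs with images $\{1,3\}$ and $\{2,1\}$ coming from the two processing orders), so $|E^*_2|=2$ while $\lfloor m/2\rfloor=1$. The same phenomenon appears at scale in the paper's tight construction of Corollary~\ref{cl:mlgmtight}: there the $j$th column carries $m/(k-j+1)$ distinct reachable elements, so the \emph{later} columns are the crowded ones --- the opposite of what your per-column budget requires. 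A second, independent problem is the inference ``disjoint blocks imply $|E^*_i|\cdot i\le m$'': the size-$i$ blocks you attach to distinct $b,b'\in E^*_i$ are subsets of the images of \emph{different} POMs, so their union lives in $E^*$ (which can have size $\Theta(m\log m)$), not inside a single image of size $m$; even perfect disjointness would not yield the bound. In the example above the two blocks are $\{1,2\}$ and $\{1,3\}$, whose union already has size $3>m$.

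The paper distributes the budget $\lfloor m/i\rfloor$ not over columns but over \emph{matchings}, by induction on the size $k$ of a finite set $T$ of POMs covering $E^*$. It considers the rightmost reached position in each of the $m$ rows, restricts to those reached by exactly one POM of $T$, and by pigeonhole finds a POM $\tau\in T$ that uniquely accounts for at most $\lfloor m/k\rfloor$ of them; every other element of $s(\tau)$ sits at or left of a position selected by some other POM in its row, hence is also reached by $T\setminus\{\tau\}$ by Observation~\ref{obs:triv}. Removing $\tau$ therefore loses at most $\lfloor m/k\rfloor$ elements, and induction gives $|E(T)|\le\sum_{i=1}^{k}\lfloor m/i\rfloor$. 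Your instinct that Observation~\ref{obs:triv} (elements left of a selected position are forced) is the key structural fact is correct, but it must be combined with this pigeonhole over matchings rather than a partition by columns; as it stands, your proposed route cannot be repaired by a cleverer choice of witnesses or potential function, because the statement it aims to prove is itself false.
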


This improves the trivial upper bound of $m^2$ 
which appears 
in~\cite{henzecomplexity}. 
In~\cite{henzecomplexity}
the authors also presented a lower bound construction, which has asymptotically as many reachable elements as is implied by our upper bound.
Thus Theorem~\ref{thm:elements} is asymptotically tight.
	
Denote by $\mathcal{E}(M)$ the family of all (exactly) reachable $m$-element sets of $M$.
For example, if all the elements in the first column of $M$ are distinct,
then $|\mathcal{E}(M)| = 1$.
With Theorem~\ref{thm:elements} we can bound $\mathcal{E}(M)$.

\begin{corollary}\label{crl:mlogm_choose_m}
For any matrix $M$, we have $|\mathcal{E}(M)| \le \binom{m (\ln m +1)}{m}$.
\end{corollary}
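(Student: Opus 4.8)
The plan is to deduce the corollary directly from Theorem~\ref{thm:elements} by a counting argument: every exactly reachable $m$-element set lives inside the set $E^*$ of reachable elements, and there are only $\binom{|E^*|}{m}$ candidate subsets of that size.

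Concretely, I would first note that if $E \in \mathcal{E}(M)$, say $E = s(\tau)$ for a POM $\tau$, then each $b \in E$ is selected by a POM and is hence reachable, so $E \subseteq E^*$; moreover $|E| = m$ because $\tau$ is an injection of the $m$-element set $A$ into $B$. Thus $\mathcal{E}(M)$ is a subfamily of the collection of $m$-element subsets of $E^*$, which gives $|\mathcal{E}(M)| \le \binom{|E^*|}{m}$. If $|E^*| < m$ then $\mathcal{E}(M) = \varnothing$ and the bound is trivial, so assume $m \le |E^*| \le m(\ln m + 1)$, the last inequality being exactly Theorem~\ref{thm:elements}. Since $\binom{x}{m} = \frac{1}{m!}\prod_{j=0}^{m-1}(x - j)$ is non-decreasing for real $x \ge m$ (all factors being positive and increasing in $x$), we conclude $\binom{|E^*|}{m} \le \binom{m(\ln m + 1)}{m}$, as claimed.

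There is essentially no obstacle here: the corollary is an immediate consequence of the theorem together with the observation that reachable $m$-sets are $m$-subsets of $E^*$. The only subtlety worth spelling out is that $m(\ln m + 1)$ need not be an integer, so the final step invokes monotonicity of the generalized binomial coefficient rather than a direct combinatorial count; alternatively one could first replace $|E^*|$ by the integer bound $\sum_{i=1}^{m}\lfloor m/i\rfloor$ from Theorem~\ref{thm:elements} and work with integer arguments throughout, then pass to $m(\ln m+1)$ at the end.
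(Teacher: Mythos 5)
Your argument is correct and is exactly the reasoning the paper intends: the corollary is stated as an immediate consequence of Theorem~\ref{thm:elements}, since every exactly reachable set is an $m$-element subset of $E^*$ and hence there are at most $\binom{|E^*|}{m}\le\binom{m(\ln m+1)}{m}$ of them. Your extra care about the non-integer argument of the binomial coefficient (monotonicity of $\binom{x}{m}$ for $x\ge m$) is a reasonable way to make the last step rigorous and does not change the approach.
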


This is the only non-trivial upper bound that we found, improving $\binom{m^2}{m}$ of \cite{henzecomplexity}. As an important consequence, our upper bound also improves the upper bound on the pattern matching problem regarded in \cite{henzecomplexity}. The best known lower bound is $\binom{m}{\lceil m/2\rceil}$ \cite{henzecomplexity}.
The construction in that paper is a matrix where in the first $\lfloor m/2\rfloor$ columns the $i$-th column contains only element $i$ and in the $(\lfloor m/2\rfloor+1)$-st column there are $m$ different elements which are also all different from $1,2,\dots , \lfloor m/2\rfloor$.


\subsubsection{Characterization of avoidable elements and sets}
 Section~\ref{sec:avoid} concentrates on the notion of avoidable elements.
Let $x$ be an element suspected to be avoidable.
Given some set of rows $R$ we denote by $E_x(R)$ the set of elements left of $x$ in the rows $R$ (i.e., $y$ is in $E_x(R)$ if and only if there exists a row $r\in R$ in which $y$ appears to the left of $x$; if $x$ does not appear in $r$ then all elements in $r$ are regarded to be left of $x$).

\begin{restatable}{thm}{thmUnavoidable}\label{thm:unavoidable}
An element $x$ of a matrix $M$ is avoidable if and only if
for every set $R$ of rows of $M$, we have:
	\[|E_x(R)| \geq  |R| .\]
\end{restatable}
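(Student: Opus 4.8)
The plan is to establish the two implications separately; the converse is the substantial one, and it rests on Hall's theorem combined with a Pareto-improvement argument.

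\textbf{Forward direction} ($x$ avoidable $\Rightarrow$ the inequality holds). Suppose $\tau$ is a POM with $x\notin s(\tau)$, and fix an arbitrary set $R$ of rows. I would first note that for each $r\in R$ the element $\tau(r)$ lies weakly to the left of $x$ in $r$: a POM only uses positions among the first $m$ entries of a row, and were $\tau(r)$ strictly to the right of $x$ in $r$, the singleton $\{r\}$ would be a blocking coalition, since $r$ could switch to the unused element $x$, which it prefers. As $\tau(r)\neq x$, this yields $\tau(r)\in E_x(R)$ in both cases ($x$ occurs in $r$ or not). Since $\tau$ is injective, $r\mapsto\tau(r)$ embeds $R$ into $E_x(R)$, so $|E_x(R)|\ge|R|$.

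\textbf{Converse direction} (the inequality $\Rightarrow$ $x$ avoidable). Assume $|E_x(R)|\ge|R|$ for every set $R$ of rows. I would form the bipartite graph on (rows of $M$) $\cup$ $B$ joining each row $r$ to every element appearing to the left of $x$ in $r$ (to every element of $r$ if $x\notin r$); the neighbourhood of a set $R$ of rows is then exactly $E_x(R)$, so the hypothesis is precisely Hall's condition, and Hall's theorem gives an injection $\sigma\colon A\to B$ with $\sigma(r)$ to the left of $x$ in $r$, hence $\sigma(r)\neq x$, for every row $r$. Thus $\sigma$ is a matching of all of $A$ that avoids $x$, but possibly not a POM. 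The remaining step is to repair $\sigma$ into a POM without re‑introducing $x$: while the current matching is not a POM it has a blocking coalition $A'$, and performing the corresponding exchange yields a matching in which the rows of $A'$ strictly improve and all other rows are unchanged. The sum over all rows of the index of the assigned element in that row's preference list strictly decreases at each such step and is bounded below (all assignments stay among the first $m$ positions), so after finitely many steps we reach a POM $\tau'$ in which every row is weakly better off than under $\sigma$. Hence $\tau'(r)$ lies weakly to the left of $\sigma(r)$, thus strictly to the left of $x$, so $\tau'(r)\neq x$ for all $r$; therefore $x\notin s(\tau')$ and $x$ is avoidable.

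\textbf{Main obstacle.} The delicate point is this last step: one must verify that iterating blocking‑coalition exchanges both terminates and never harms any applicant, so that the property ``lies to the left of $x$'' is preserved along the way. Both follow from the monotone potential described above, but they rely on the earlier observation that every relevant matching — in particular every POM — uses only the first $m$ entries of each preference list, which is what makes that potential well defined and finite.
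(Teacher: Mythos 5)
Your proof is correct and follows essentially the same route as the paper's: the forward direction via the observation that a singleton $\{r\}$ would block if $\tau(r)$ sat to the right of the unused $x$, and the converse via Hall's theorem applied to the bipartite graph joining rows to the elements left of $x$. The only cosmetic difference is in the final repair step: where the paper turns the Hall matching into a $1$-POM and invokes Lemma~\ref{lem:equivNotion} to obtain a POM with the same image set, you inline that upgrade with a direct potential-function argument on iterated blocking-coalition exchanges, which preserves ``strictly left of $x$'' for the same monotonicity reason.
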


Extremal and algorithmic results
in connection to avoidable elements are included in Section~\ref{sec:avoid}, as they follow from the proof of Theorem~\ref{thm:unavoidable}. We prove the following corollary.
%
%
%
\begin{restatable}{corollary}{corAlgorithmUnavoidable}\label{thm:unavoidablecompl}
Deciding whether an element $x$ is avoidable can be done in\\  $O(m^2\sqrt{m+n})$ time and also in $O(m^3)$ time. Listing all unavoidable elements can be done in  $O(m^2n\sqrt{m+n})$ time and also in $O(m^5)$ time.
\end{restatable}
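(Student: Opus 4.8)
The plan is to read Theorem~\ref{thm:unavoidable} as a Hall-type condition and then plug in standard bipartite matching algorithms. Fix an element $x$; since a POM never selects a position beyond the $m$-th column, we may assume $M$ is $m\times m$. Form the bipartite graph $G_x$ whose one side is the set of rows $\{r_1,\dots,r_m\}$ and whose other side is the set $B'$ of elements occurring in $M$, with $r$ adjacent to $y$ precisely when $y$ lies to the left of $x$ in row $r$ (using the convention of the statement: if $x$ is absent from $r$, every element of $r$ counts as lying to its left). By construction, the neighbourhood of any set $R$ of rows is exactly $E_x(R)$, so the family of inequalities ``$|E_x(R)|\ge|R|$ for all $R$'' is literally Hall's condition for a matching of $G_x$ that saturates the row side. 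Hence, by Hall's theorem together with Theorem~\ref{thm:unavoidable}, $x$ is avoidable if and only if $G_x$ admits a matching of size $m$.

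This yields the decision algorithm directly: build $G_x$ and compute a maximum matching; $x$ is avoidable iff the matching has size $m$. The graph $G_x$ has at most $m^2$ edges (each row contributes at most $m$) and at most $m+n$ vertices, so the Hopcroft--Karp algorithm finds a maximum matching in $O(m^2\sqrt{m+n})$ time. Since, moreover, only the $O(m^2)$ elements that actually occur show up in $G_x$, a maximum matching can alternatively be found by $O(m)$ rounds of augmenting-path search, each costing $O(m^2)$, for a total of $O(m^3)$ independently of $n$. This settles the first sentence of the corollary.

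For the listing claim, note that a house not occurring in $M$ belongs to no POM and is therefore avoidable, so the unavoidable elements form a subset of the at most $m^2$ distinct elements appearing in $M$. Running the $O(m^3)$ test on each of these gives $O(m^5)$; running the $O(m^2\sqrt{m+n})$ test on each of the $n$ elements of $B$ gives $O(m^2n\sqrt{m+n})$.

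The substantive content is entirely carried by Theorem~\ref{thm:unavoidable}, so there is no genuine obstacle here beyond bookkeeping. The points that need care are: that the convention for rows missing $x$ is exactly what makes $E_x(R)$ coincide with the $G_x$-neighbourhood of $R$; that one must ask for a row-saturating matching (size exactly $m$) rather than merely a maximum one; and that the vertex count of $G_x$ stays $O(m^2)$ regardless of $n$, which is what makes the $n$-free bounds $O(m^3)$ and $O(m^5)$ valid.
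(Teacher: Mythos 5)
Your proposal is correct and follows essentially the same route as the paper: reduce avoidability to a row-saturating matching in the bipartite ``left of $x$'' graph via Theorem~\ref{thm:unavoidable} and Hall's theorem, apply Hopcroft--Karp for the $O(m^2\sqrt{m+n})$ bound, and obtain the $n$-free bounds by restricting attention to the at most $m^2$ elements actually occurring in the matrix. The only cosmetic difference is that the paper derives $O(m^3)$ by substituting $n\le m^2$ into the Hopcroft--Karp bound, whereas you invoke $O(m)$ augmenting-path rounds; both are valid.
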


Both results follow from an easy reduction to matchings in a bipartite graph.

\subsubsection{Complexity of reachability}
Computational questions about reachable elements are considered in Section~\ref{sec:Compl}.
Among the computational questions connected to the notions
defined in this paper, we selected those
that we found natural and canonical.
%
The problems are defined as follows:


\begin{problem} \label{prb:DR}(Deciding Reachability)\newline
\textbf{Input:} A matrix $M$ and a set $D\subseteq B$.\\ \noindent
\textbf{Question:} Is $D$ reachable?
\end{problem}


\begin{problem} \label{prb:DER}(Deciding Exact Reachability)\newline
\textbf{Input:} A matrix $M$ and a set $E\subseteq B$\\ \noindent
\textbf{Question:} Is $E$ exactly reachable?
\end{problem}


\begin{problem}\label{prb:CR} (Counting Reachable Sets)\newline
\textbf{Input:} A matrix $M$.\\ \noindent
\textbf{Question:} How many sets $D\subseteq B$ are reachable?
\end{problem}


\begin{problem} \label{prb:CER}(Counting Exactly Reachable Sets)\newline
\textbf{Input:} A matrix $M$.\\ \noindent
\textbf{Question:} How many sets $E$ are exactly reachable?
\end{problem}


\begin{problem} \label{prb:CERSS}(Counting Exactly Reachable Supersets)\newline
\textbf{Input:} A matrix $M$ and some set $D\subseteq B$.\\ \noindent
\textbf{Question:} How many sets $E$ with $D\subseteq E\subseteq B$ are exactly reachable?
\end{problem}


\noindent The next table summarizes our
findings regarding algorithmic questions.
The general case is always the same as with $3$ column matrices.
The hardness results for Problems~\ref{prb:DR} and~\ref{prb:CERSS}
already hold if $D$ contains exactly $1$ element.
It was already observed by Henze, Jaume and Keszegh that Problem 1 is NP-complete~\cite{henzecomplexity}. Our contribution, among others, is to show NP-completeness also for matrices with only $3$ columns. The running time of Problem~\ref{prb:DER} follows from the discussion in Section~\ref{sec:avoid} and Corollary~\ref{thm:unavoidablecompl}.
\begin{center}
\begin{tabular}{|c|cc|cc|}
\hline
Problem & 2 columns & proof & 3 columns & proof \\
\hline
\ref{prb:DR})  & polynomial & (Thm.~\ref{thm:polyreachable}) & NP-complete & (Thm.~\ref{thm:ReductionSAT}) \\
\ref{prb:DER})  & polynomial &(Cor.~\ref{thm:unavoidablecompl}) & polynomial &(Cor.~\ref{thm:unavoidablecompl}) \\
\ref{prb:CR}) &  explicit formula &(Thm.~\ref{lem:numberSelectable}) & ? & \\
\ref{prb:CER})  & \#P-complete & (Thm.~\ref{thm:problem5}) & \#P-complete & (Thm.~\ref{thm:problem5}) \\
\ref{prb:CERSS})   &  \#P-complete & (Thm.~\ref{thm:problem5}) & \#P-complete &(Thm.~\ref{thm:ReductionSAT}) \\
\hline
\end{tabular}
\end{center}

It remains an open question whether Problem~\ref{prb:CR} is hard for general matrices. We conjecture it is already \#P-complete for $3$ column matrices.

Problem~\ref{prb:CER} is a special case of Problem~\ref{prb:CERSS} for the case 
 that $D = \varnothing $.

\subsection{Motivation and related work}

Pareto optimal matchings are one of many concepts to
define a  `good' matching in a graph with preference lists.
The graph properties and the preference lists depend on the
precise situation they are supposed to model.
Most research in this area is driven by economic applications.
%
%
A recent book on matchings under preferences, by David Manlove~\cite{manlove2013algorithmics}, contains a broad overview of the field and usually focuses on algorithmic rather than economic questions.
In this paper we tried, whenever applicable, to follow the notation therein.


A field that evidently seems to be related to our topic is that of stable matchings. This field is very broad and belongs to economic game theory. The seminal work of Gale and Shapley is the starting point for this field~\cite{gale1962college}. Some work in this field and different variations of the problem can be found in the Ph.D. thesis of Sandy Scott~\cite{scott2005study}; recent papers can be found in the proceedings of the Second International Workshop on Matching Under Preferences called MATCH UP~\cite{MATCHUP}. 
In these works there are many different concepts of preferences and stability and they ask for efficiently computable solutions that maximize the outcome for the participants in one way or the other. Readers interested more broadly in the topic of algorithmic game theory are referred to the book edited by Nisan, Roughgarden, Tardos and Vazirani~\cite{nisan2007algorithmic}.

In contrast to most research done in these areas, our question is more
combinatorial in nature. The underlying algorithmic question of computing
a Pareto optimal matching can be solved by the greedy algorithm. Thus,
instead of existence questions, rather the enumerative questions become
interesting.
(Note that it is non-trivial to compute POMs with extra features.)
Nevertheless, for the early definition of stability many authors have
tried to establish upper and lower bounds on the number of stable matchings and some combinatorial structures have been revealed. See \cite[Section 2.2.2]{manlove2013algorithmics} for an overview of results in this direction.

Some complexity results similar to ours have been found earlier. The first dates back to 2005~\cite{abraham2005pareto}. The 
main result in this work is an efficient algorithm to compute a POM with maximum cardinality~\cite{abraham2005pareto}. Here the preference lists are incomplete.
(See Section~\ref{sibsec:prelim} for precise
definitions of incomplete preference lists.)
Additionally, the authors of~\cite{abraham2005pareto}
show a hardness result for computing a minimum POM. This result has already some ideas of the proof of Theorem~\ref{thm:unavoidable}. Further, the proof of Theorem~\ref{thm:problem5} implies hardness of computation of minimum  matchings. Although they show an easy $2$-approximation, it is open whether there exists a PTAS for a minimum POM.

We are aware of four recent papers that considered similar results to our complexity
results~\cite{saban2015complexity, aziz2013computational, aziz2014parametrized, cechlarova2013pareto}.
Their main motivation is to study the behavior of the randomized serial dictatorship also called randomized priority allocation. The randomized serial dictatorship picks a permutation at random and thereafter computes the corresponding greedy matching.

The first paper is by Sab\'{a}n and Sethuraman~\cite{saban2015complexity}. Their result, reformulated in our context, is NP-hardness of Problem~\ref{prb:DR}, for arbitrary matrices. Aziz, Brandt and Brill~\cite{aziz2013computational} show \#P hardness for a variant of Problem~\ref{prb:CERSS} for arbitrary matrices. We improve these results, as we can show this holds also for matrices with only $3$ columns. Aziz and Mestre show that constraint versions are solvable in polynomial time~\cite{aziz2014parametrized}. Cechl\'arov\'a et al.~\cite{cechlarova2013pareto} consider a generalized setting:
they show NP-hardness of computing a minimum maximal matching even for matrices with $2$ columns by an elegant reduction from vertex-cover very similar in spirit to the proof in Theorem~\ref{thm:problem5}.

This work is originally motivated by work that was presented at the EuroCG 2012 in Braunschweig~\cite{henzecomplexity}. See also the follow up work~\cite{henzejournal}. The authors
considered a generalisation of Voronoi diagrams under the assumption that not just one point, but many points are matched in a way that minimizes
the sum of the squares of distances between matched points. From the definitions in their paper, the Pareto optimality comes as a natural property. They asked explicitly for the number of exactly reachable sets, as it gives an upper bound on the number of Voronoi cells in the above setting. Motivated by this, they gave lower and upper bounds on the number of exactly reachable stable sets. To do this, first they gave lower and upper bounds
for the number of reachable elements. In this paper we improve their upper bound for the number of reachable elements and we prove that their lower bound is asymptotically correct. This also yields a significant improvement on the previous upper bound on the number of exactly reachable stable sets, although in this case our new upper bound still does not meet the lower bound they had.
Their work is based on work by Rote presented at the EuroCG 2010  in Dortmund~\cite{rote2010partial}.

\subsection{Preliminaries}\label{sibsec:prelim}

As we also want to study matrices with fewer than $m$ columns, we need to define what we mean by a matching under these assumptions. There
 are two equivalent ways. First, we could say that every row, for which all elements are already picked by other rows, just does not get assigned to anything.
A second and nicer way is to add columns, with all elements in one column being the same and not appearing before. If we want to know whether some set $E$ is exactly reachable in the first way, we construct $E'$ from $E$ by adding the elements from the first $m- |E|$ additional columns (and vice versa).
The following is an example of a $2$ column matrix.
\[
\left(\begin{array}{cccc}
1 & 4 \\
2 & 1 \\
2 &5  \\
4 & 3 \end{array}\right)
\sim
\left(\begin{array}{cccc}
1 & 4& c_1 & c_2 \\
2 & 1&c_1 & c_2 \\
2 &5 & c_1 & c_2 \\
4 & 3& c_1 & c_2 \end{array}\right)
\]

We use the first approach. However, using the second approach, some hardness results will carry over from $2$ or $3$ column matrices to $k$ column matrices ($2\leq k \leq m$). In such a case, we will point this out again at the appropriate places.

To see the correspondence between matchings in a graph-theoretical sense and in our context we define the \emph{bipartite row element graph} $G$ as follows. The vertices are defined as the set of rows and elements; an element $e$ is adjacent to some row $r$ if and only if $e$ appears in $r$. See the example for the special case of a matrix with only $2$ columns.
\begin{center}
	\includegraphics{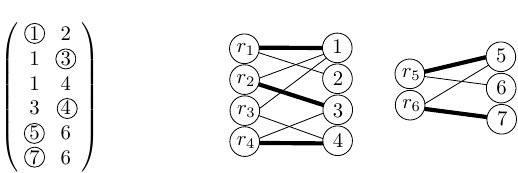}
\end{center}
The circled POM corresponds to the thick matching on the right side.

If there is no blocking coalition of size 
at most $i$, 
we call the matching an \emph{$i$-Pareto optimal matching} ($i$-POM).
In particular, this implies that every POM is an $i$-POM.
We call a matching \emph{$1$-POM} if there is no blocking coalition of size one.
The next matching is $1$-Pareto optimal but not Pareto optimal.
\[
\left(
\begin{array}{ccc}
        1 & \circled{5} & 3  \\
    5 & 1 & \circled{4}  \\
    5 & \circled{1} & 2 \\
\end{array}
\right)
\]

\begin{observation}\label{obs:triv}
If $\tau$ is a POM and $\tau$ selects position $p$ in row $a$, then $\tau$ selects every element that appears in row $a$ left of $p$.
\end{observation}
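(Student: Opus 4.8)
The statement to prove is Observation~\ref{obs:triv}: if $\tau$ is a POM and $\tau$ selects position $p$ in row $a$, then $\tau$ selects every element that appears in row $a$ left of $p$.

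The plan is to argue by contraposition via the blocking coalition definition. Suppose some element $b$ appears in row $a$ to the left of $p$ (so $a$ prefers $b$ over the house $\tau(a)$ currently at position $p$), but $b$ is not selected by $\tau$, i.e., $b \notin s(\tau)$. I would then exhibit a blocking coalition of size one, namely $A' = \{a\}$. Define $\tau'$ to agree with $\tau$ on every row other than $a$, and set $\tau'(a) = b$. Since $b$ is not in the image of $\tau$ and $\tau$ is injective, $\tau'$ is still injective (the only changed value is $\tau'(a) = b$, which collides with no other $\tau'(r) = \tau(r)$ for $r \neq a$), hence $\tau'$ is a matching. It differs from $\tau$ only on $A' = \{a\}$, and $a$ strictly improves because $b$ lies to the left of $\tau(a)$ in $a$'s preference list. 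Thus $\{a\}$ is a blocking coalition, contradicting that $\tau$ is a POM.

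The key steps in order: (1) assume for contradiction that some house $b$ left of $p$ in row $a$ is not selected by $\tau$; (2) construct $\tau'$ by rerouting $a$ to $b$ and leaving everything else fixed; (3) verify $\tau'$ is a matching (injectivity, using $b \notin s(\tau)$); (4) verify $a$ improves; (5) conclude $\{a\}$ blocks $\tau$, contradiction. Concluding that every left-of-$p$ element is selected then follows since $b$ was arbitrary.

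There is essentially no obstacle here — this is an immediate unpacking of the definitions, and the only thing to be careful about is checking that $\tau'$ remains injective, which is where the hypothesis $b \notin s(\tau)$ is used. One could also phrase it using the notion of a $1$-POM introduced just above: what we are really showing is that a $1$-POM (a fortiori a POM) must select every element left of any selected position in the same row, which is exactly the statement.
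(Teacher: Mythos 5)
Your proof is correct and matches the argument the paper intends: the paper states Observation~\ref{obs:triv} without proof precisely because the one-line argument is the singleton blocking coalition $\{a\}$ obtained by rerouting $a$ to the unselected element $b$, which is exactly what you wrote (and your remark that only a size-one coalition is needed is why the paper notes the observation also holds for $1$-POMs). No gaps; the injectivity check using $b \notin s(\tau)$ is the only point requiring care and you handled it.
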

This observation also holds for $1$-POMs.
A matching $\tau$ is \emph{greedy} if there exists a permutation $\pi$ of $A$ such that the matching can be generated in the following manner: we process the rows of $M$ in the order determined by $\pi$, and in each row we pick the leftmost element that was not picked earlier.
Given some permutation $\pi$ we call the corresponding greedy matching $\tau_\pi$.

In the literature, the mechanism of determining a matching from a permutation
is called the \emph{Serial Dictatorship Mechanism}.

Lemma~\ref{lem:equivNotion} brings all the introduced notions together, showing that POMs, 1-POMs and greedy matchings select exactly the same sets.

Actually, POM and greedy matchings are equivalent notions, as proven in~\cite{abraham2005pareto, henzecomplexity, Svenson1994}, and probably also by many others.

For the proof  of the next lemma we need an equivalent definition of POM.
Assume $\tau_1 \neq \tau_2$.
We say $\tau_1$ is \emph{better than} $\tau_2$ if every element $a\in A$ is matched in $\tau_1$ to a better or equally good element than in $\tau_2$, according to the preference list of $a$.  POMs are exactly the maximal elements with respect to this \emph{better than} relation.

\begin{lemma}\label{lem:equivNotion}
  Let $E\subseteq \{1,\ldots,n\}$ with $|E|=m$. The following statements are equivalent.
  \begin{enumerate}
   \item $E$ is (exactly) reachable, i.e.\ there exists a POM $\tau$ with $s(\tau) = E$.
   \item There exists a permutation $\pi$ such that for the greedy matching $\tau_\pi$ we have $s(\tau_\pi) = E$.
   \item There exists a $1$-Pareto optimal matching (1-POM) $\tau$ with $s(\tau) = E$.
  \end{enumerate}
\end{lemma}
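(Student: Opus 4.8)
The plan is to prove the cyclic chain of implications that makes (1), (2), (3) equivalent; the implication $(1)\Rightarrow(3)$ is immediate (a POM has no blocking coalition, in particular none of size one), and $(1)\Leftrightarrow(2)$ is essentially the classical equivalence of POM and serial-dictatorship matchings, so the real content is $(3)\Rightarrow(1)$.

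For $(1)\Rightarrow(2)$ I would start from a POM $\tau$ with $s(\tau)=E$ and construct the permutation directly. Define a relation on rows by $a\prec a'$ whenever $\tau$ selects, in row $a'$, some element lying strictly to the left of $\tau(a)$ in row $a$. By Observation~\ref{obs:triv} every element to the left of $\tau(a)$ in row $a$ is selected by $\tau$, so the set of elements left of $\tau(a)$ in row $a$ is exactly $\{\tau(a') : a\prec a'\}$, and each such $a'$ is unique. The key claim is that the digraph of $\prec$ is acyclic: there are no loops (a row never blocks its own better positions), and a cycle $a_1\prec a_2\prec\dots\prec a_t\prec a_1$ with $t\ge 2$ would mean $a_i$ strictly prefers $\tau(a_{i+1})$ to $\tau(a_i)$ for all $i$ (indices mod $t$), so the cyclic reassignment $a_i\mapsto\tau(a_{i+1})$ is a blocking coalition, contradicting Pareto optimality. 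Processing the rows in an order in which $a'$ precedes $a$ whenever $a\prec a'$ (possible by acyclicity), a straightforward induction shows the greedy procedure reproduces $\tau$: when $a$ is processed, all elements left of $\tau(a)$ in row $a$ lie in $E$ and have already been taken by the $\prec$-predecessors of $a$, while $\tau(a)$ is still free (otherwise $\tau$ would not be injective), so greedy picks $\tau(a)$; hence $s(\tau_\pi)=E$. Conversely, $(2)\Rightarrow(1)$ is the classical observation that a greedy matching is Pareto optimal: in any blocking coalition the row occurring first in $\pi$ cannot actually improve, since at the moment it was processed the elements to the left of its greedy choice were already held by earlier rows, whose assignments are unchanged in the alternative matching. (This also re-gives $(2)\Rightarrow(3)$.)

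The heart of the lemma is $(3)\Rightarrow(1)$. Given a $1$-POM $\tau$ with $s(\tau)=E$ and $|E|=m$, I would consider the family $\mathcal M$ of all matchings $\sigma$ with $s(\sigma)=E$ such that, for every row $a$, all elements lying to the left of $\sigma(a)$ in row $a$ belong to $E$. By Observation~\ref{obs:triv} applied to $1$-POMs we have $\tau\in\mathcal M$, so $\mathcal M\neq\varnothing$; note every $\sigma\in\mathcal M$ is automatically a $1$-POM, since any element left of $\sigma(a)$ lies in $E=s(\sigma)$ and is therefore already assigned. Now take $\sigma\in\mathcal M$ maximal with respect to the ``better than'' relation (possible since $\mathcal M$ is finite) and claim $\sigma$ is a POM. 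If it were not, a blocking coalition $A'$ with matching $\sigma'$ would exist; for each $a\in A'$ the element $\sigma'(a)$ lies strictly to the left of $\sigma(a)$ in row $a$ and hence, by $\sigma\in\mathcal M$, lies in $E$. Combined with $\sigma'|_{A\setminus A'}=\sigma|_{A\setminus A'}$ and $|E|=m$, this forces $s(\sigma')=E$, and the defining left-set containment is inherited from $\sigma$, so $\sigma'\in\mathcal M$; but $\sigma'$ is strictly better than $\sigma$, contradicting maximality. Thus $\sigma$ is a POM reaching $E$, and all three statements are equivalent.

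I expect the main obstacle to be exactly this last step: showing that a ``better than''-maximal element of $\mathcal M$ is not merely a $1$-POM but globally Pareto optimal. The delicate point is that an alternative matching witnessing a blocking coalition could a priori move some row onto an element outside $E$; the role of the defining condition of $\mathcal M$, crucially together with $|E|=m$, is precisely to forbid this, so that any improvement stays inside $E$ and inside $\mathcal M$ — which is what lets the maximality argument go through. Everything else is bookkeeping with Observation~\ref{obs:triv} and the definition of the greedy (serial dictatorship) matching.
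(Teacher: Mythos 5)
Your proof is correct and follows essentially the same route as the paper: the $(1)\Rightarrow(2)$ step is the paper's cycle/blocking-coalition argument repackaged as a topological ordering of an acyclic digraph, and the $(3)\Rightarrow(1)$ step is the paper's argument of taking a \emph{better than}-maximal matching in a finite nonempty family containing the given $1$-POM (you define the family by the image-and-left-containment condition, the paper by ``better or equal to $\tau_0$'', but the maximality-plus-Observation~\ref{obs:triv} mechanism is identical). The only difference is cosmetic: you close the equivalence with a direct $(2)\Rightarrow(1)$ instead of the paper's $(2)\Rightarrow(3)\Rightarrow(1)$ chain.
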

\begin{proof}
  $[1 \Rightarrow 2]$
Let $\tau$ be a POM matching such that $s(\tau)=E$.
  We construct a permutation $\pi$ inductively. If possible take as the next row, in the order of our permutation, the one that has a position of $\tau$ in its first entry. Delete the element $a$ at this position from all other rows and continue. We show that at each stage there must be such a row.
  For the purpose of contradiction assume such a row does not exist. Take any row, denoted by $q_1$,
  and let $e_1$ be some element which is left to the element selected by $\tau$ in row $q_1$.
  Because $\tau$ is Pareto optimal, there exists some row $r_2$ selecting $e_1$. Let $e_2$ be any element left to $e_1$ in row $r_2$. In this way we can define a sequence $(e_i)$ and $(r_i)$. As we have only finitely many elements, at some point we get a first $e_j$ that appears earlier in the sequence $e_i=e_j$, $i<j$. This implies that in the rows $r_i,\dots r_j$ we can improve simultaneously (i.e., it is a blocking coalition), which is a contradiction to the assumption that $\tau$ is Pareto optimal.

  $[2 \Rightarrow 3]$
  As every row picks the best element, not yet selected, it is clear that no single row can improve.

  $[3 \Rightarrow 1]$
  Let $\tau_0$ be some $1$-Pareto optimal matching and $E = s(\tau_0)$. Observe that all the elements left to the elements picked by $\tau_0$ are in $E$. The set of matchings that are better or equal to $\tau_0$ is non-empty as it contains $\tau_0$ and the set is of course finite, so there exists a best matching $\tau_1$ among them, i.e. one for which there is no better matching. This must be a POM and by Observation~\ref{obs:triv} we have $s(\tau_1)\subseteq s(\tau_0)=E$,
 and the size of $s(\tau_1)$ is also $m$. This implies $s(\tau_1) = E$.
\end{proof}
 Note that this lemma implies that also for any $i$, $i$-POMs select the same sets as POMs/$1$-POMs. Note also that the proof of Lemma~\ref{lem:equivNotion} implies that actually every POM matching is greedy. The inverse is also true and left as an exercise, as we will not use it later.







\section{Counting reachable elements and sets}\label{reachable}

We start with a discussion of reachable positions.
For every row $r$, there exists a reachable position $p_r$ furthest to the right in that row, we call such a position \emph{last reachable}. However  not all positions left of the last reachable position must be reachable. Consider for example the following matrix together with the matching $\tau$ indicated by circles.
\[
\left(\begin{array}{cccc}
\circled{5} & 4& 3 & 2 \\
5 & \circled{1} & 6 & 7 \\
1 &\circled{2} & 8 & 9 \\
2 & 1& 5 & \circled{4} \end{array}\right)
\]
The matching $\tau$ is  Pareto optimal and thus the circled position in the bottom row with element $4$ is the last reachable position in that row. However, it is easy to verify that the two positions left of this circled position (with elements $1$ and $5$) are not reachable.

\thmElements*


\begin{proof}

Let $T$ be some set of $k$ POMs of $M$.
We denote by $E(T)$ the set of elements reached by at least one POM of $T$.
(formally: $E(T) = \bigcup_{\tau \in T}s(\tau)$.)
We claim: \[\left| E(T) \right| \leq \sum_{i=1}^{k}{\left\lfloor m/i\right\rfloor}.\]
Note
\[ \sum_{i=1}^{k}{\left\lfloor m/i\right\rfloor} \leq \sum_{i=1}^{m}{\left\lfloor m/i\right\rfloor} \leq m (\ln m +1).
\]
 Because $\lfloor m/i \rfloor = 0$ for $i>m$ and the known bound on the harmonic series.
Thus the theorem follows from the claim.

The proof goes by induction on $k$.	The base case $k= 1$ is true as one POM selects exactly $m$ different elements.

Consider now  a set $T$ of $k\geq 2$ POMs and the set of positions reached by $T$.
Among these positions we denote by $p_i$ the position furthest to the right in row $i$ and we denote $F = \{p_1,\ldots ,p_m \}$.
We say that an element $e$ (resp.\ position $p$) is uniquely reachable by some $\tau$ if $\tau$ is the only POM in $T$ that reaches $e$ (resp.\ selects $p$). Consider the set $G\subseteq F$ of those rightmost reachable positions that are reachable by exactly one POM of $T$. By the pigeon-hole principle there exists a POM $\tau$ in $T$ that reaches at most $1/k$ portion of $G$.
 Denote the set of elements in these positions by $H$ ($|H|\leq \lfloor m/k \rfloor$).


By the definition of $H$ all elements $s(\tau) \setminus H$ are not selected uniquely by $\tau$, i.e. some other matching of $T$ also selects it.

Let us explain this in more detail. Consider an element $e \in s(\tau) \setminus H$ and denote by $p$ the position of $e$. As $p \not \in G$ there exists another matching $\tau' \neq \tau$ that either also selects $p$ or $\tau'$ selects a position further to the right of $p$ in the same row. In both cases holds: $\tau'$ must select $e$ by Observation~\ref{obs:triv}.

Thus the rest of the reached elements are also reachable by $T \setminus \tau$.
By induction we get
 \[E(T) \leq E(T \setminus \tau) \ + \lfloor m/k\rfloor\leq \left( \sum_{i=1}^{k-1}{\left\lfloor m/i\right\rfloor}\right) \ + \ \left\lfloor m/k\right\rfloor = \sum_{i=1}^{k}{\left\lfloor m/i\right\rfloor}. \qedhere \]
%
%
\end{proof}

A natural question is if we could asymptotically improve
upon the bound given in the previous Theorem~\ref{thm:elements}.
The following construction
by Henze, Jaume and Keszegh~\cite{henzecomplexity}
shows asymptotic tightness.

The following example has more rows than columns, however
the rows can be filled with arbitrary elements as
none of these elements are reachable anyway.

\begin{example}[\cite{henzecomplexity}]\label{ex:mlogmExample}
For each $k$, a matrix $M_k$
with $m=2^{k}$ rows and \\ $(m/2)\log 4m$ $=(k+2)2^{k-1}$ reachable elements is constructed recursively as follows.

   \[M_0 = \left(\begin{array}{c}
1
 \end{array}\right);\]
 and, for $k \geq 0$,
   \[M_{k+1} = \left(\begin{array}{c|c}
1 &  \\
\vdots & M_k'  \\
2^k & \\
\hline
1 &  \\
\vdots & M_k''  \\
2^k & \\
\end{array}\right),\]
where $M_k'$ and  $M_k''$ are relabelings\footnote{
A matrix $M'$ is a {\em relabeling} of a matrix $M$
if there is a bijective function between the elements (not positions!)
of $M$ and $M'$ such that applying this function to the elements in all the positions of $M$ we get $M'$.
Clearly two matrices that are relabelings of each other are equivalent from our perspective.
}
of $M_k$ with no common element and all elements different from $1,2,\dots, 2^k$.
The undefined entries of the matrix can be chosen arbitrarily.
\end{example}


Regarding the claim in the proof of Theorem~\ref{thm:elements}, we prove that it is tight for certain values of $k$ and $m$:
\begin{corollary}\label{cl:mlgmtight}
For every $k$ there exists a matrix $N_k$ with $m=k!$ rows
and a set $T_k$ of $k$ POMs,
such that the number of elements reached by $T_k$ is exactly $\sum_{i=1}^{k}{m/i}$.
\end{corollary}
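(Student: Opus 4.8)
The plan is to construct $N_k$ recursively so that it mirrors, in a sharpened form, the extremal behaviour that drives the proof of Theorem~\ref{thm:elements}: we want a family $T_k = \{\tau_1,\dots,\tau_k\}$ of POMs in which, for each $i$, exactly $m/i$ elements are reached by $\tau_i$ but by none of $\tau_1,\dots,\tau_{i-1}$, so that the union telescopes to $\sum_{i=1}^k m/i$. Set $m = k!$, which is divisible by every $i \le k$, so all the fractions $m/i$ are integers; this is why $k!$ is the natural choice of row count.

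First I would set up the building blocks. For the ``$i$-th layer'' I want $m/i$ disjoint blocks, each of $i$ consecutive rows, behaving like a little ``cycle of length $i$'': within one block of $i$ rows there should be $i$ private elements $e_1,\dots,e_i$ such that row $j$ has $e_j$ in its first column and $e_{j+1 \bmod i}$ in its second column (and the rest of the row filled with fresh never-reachable junk). In such a block, the matching that takes the first column of every row, and the matching that takes the second column of every row, are both greedy/1-POM on that block, reaching the same $i$ elements; more importantly, using different ``rotations'' one sees that no proper sub-coalition can improve, and the block contributes exactly $i$ elements that are ``new'' at level $i$. Concatenating, for a fixed $i$, $m/i$ such blocks vertically gives a gadget on $m$ rows contributing $m/i$ reachable elements at that level.

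Next I would glue the levels together so that the full matrix $N_k$ has all $m = k!$ rows and the $k$ designated POMs act coherently. The cleanest way is the recursive scheme in the style of Example~\ref{ex:mlogmExample}: build $N_k$ from $k$ copies of a lower-level structure together with a new ``column block'' of $m/k$ private rotation-blocks of length $k$, using fresh element names throughout so there is never accidental sharing between levels. I would then exhibit $\tau_i$ explicitly as the greedy matching for a suitable permutation $\pi_i$ of the rows — intuitively, $\pi_i$ processes the level-$i$ rotation blocks in an order that forces the ``$i$-th column'' to be picked, while on every other level it reproduces the same choices as $\tau_1$. Using Lemma~\ref{lem:equivNotion} (greedy $\Rightarrow$ POM) each $\tau_i$ is automatically a POM, so we do not have to check Pareto optimality by hand; we only have to track which elements each $\tau_i$ reaches. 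The count then follows: $\tau_1$ reaches exactly $m$ elements; for $i \ge 2$, the elements reached by $\tau_i$ but not by any earlier $\tau_j$ are precisely the $m/i$ ``rotated'' elements of the level-$i$ blocks, so $|E(T_k)| = m + \sum_{i=2}^k m/i = \sum_{i=1}^k m/i$.

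The main obstacle I expect is bookkeeping rather than a conceptual hurdle: one must design the rotation blocks and the permutations $\pi_i$ so that (a) the $k$ matchings are genuinely distinct POMs, (b) their pairwise ``overlaps'' behave exactly as the telescoping sum demands — i.e. the set uniquely reached at level $i$ has size exactly $m/i$, not less — and (c) no unintended coalition across different blocks or levels lets extra elements become reachable, which is what keeps the count from overshooting in a way we cannot control. Getting the recursion's indexing to make $m=k!$ come out right (every $m/i$ an integer, every level having the right number of blocks) and verifying the greedy permutations produce the claimed images is the part that needs care; once the gadget is pinned down, the equality $|E(T_k)| = \sum_{i=1}^{k} m/i$ is immediate.
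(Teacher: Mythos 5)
Your high-level plan (a telescoping union in which $\tau_i$ contributes exactly $m/i$ elements unreached by $\tau_1,\dots,\tau_{i-1}$, the choice $m=k!$ for divisibility, a recursive construction, and using greedy matchings plus Lemma~\ref{lem:equivNotion} to avoid checking Pareto optimality by hand) matches the paper's intent, but the gadget you propose cannot deliver the count, and the problem is conceptual rather than bookkeeping. First, your own arithmetic is inconsistent: a level-$i$ layer made of $m/i$ cyclic blocks, each block contributing ``exactly $i$ new elements,'' contributes $m$ new elements in total, not $m/i$; summed over levels this would exceed the upper bound $\sum_{i=1}^{k}\lfloor m/i\rfloor$ established in the proof of Theorem~\ref{thm:elements}, so something must give. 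Second, and more fundamentally, a block of $i$ rows on $i$ \emph{private} elements arranged cyclically is exhausted by \emph{every} POM: there are $i$ rows and only $i$ available non-junk elements, so any matching selects all of them, and consequently no element of such a block is reached by $\tau_i$ but missed by $\tau_1$. The set ``uniquely reached at level $i$'' that your telescoping relies on is therefore empty. Third, your guiding principle of ``fresh element names throughout so there is never accidental sharing'' is exactly backwards; sharing is the engine of the construction.

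The paper's $N_k$ has $k$ columns in which each element of the $j$th column occurs there exactly $k-j+1$ times, so column $j$ contributes $m/(k-j+1)$ distinct elements and the grand total is $\sum_{i=1}^{k} m/i$. Concretely, $N_{k+1}$ stacks $k+1$ element-disjoint relabeled copies of $N_k$ and prepends a first column that places the \emph{same} elements $1,\dots,k!$ in front of every copy. The $i$th permutation processes copy $i$ first (consuming those shared first-column elements there) and then runs lower-level permutations on the remaining copies, which are thereby forced past their first columns into deeper, less-repeated elements. It is this sharing of shallow elements across copies, together with the scheduling of which permutation ``wins'' each copy, that yields exactly $m/i$ genuinely new elements per matching; an all-private block structure cannot reproduce it. To repair your argument you would have to redesign the gadget so that, at each level, the candidate elements are shared among several rows and only one of the $k$ matchings gets to claim them early --- at which point you have essentially rebuilt the paper's construction.
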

\begin{proof}
The construction is again recursive.
For each $k$ we define the matrix $N_k$ with $m=k!$ rows and $k$ columns with the property that
each element appears only in one column,
and each element that appears in the $j$th column ($j\le k$), appears there exactly $k-j+1$ times.
We also define a set $\Pi_k$ of $k$ permutations of the $k!$ rows
from which we get $T_k$ by taking the greedy matchings corresponding to the permutations.
We will prove that all the elements of $N_k$ are reachable by some greedy matching of $T_k$.

The matrices $N_k$ are defined in the following way:
\[N_1 = \left(\begin{array}{c}1
\end{array}\right),\]
and for $k \geq 1$:
 \[N_{k+1} = \left(\begin{array}{c|c}
1 &  \\
\vdots & N_k^1  \\
k! & \\
\hline
\vdots & \vdots \\
\hline
1 &  \\
\vdots & N_k^{k+1}  \\
k! & \\
\end{array}\right).\]
Here $N_k^1,N_k^2,\dots ,N_k^{k+1}$ are $k+1$ matrices which are all relabelings of $N_k$
without common elements and no matrix $N_k^i$ contains an element from the set $\{1,2,\dots, k!\}$.
It is clear that $N_{k+1}$ has $(k+1)!$ rows and $k+1$ columns.
Moreover, each element in the $j$th column ($j\le k+1$) appears there $(k+1)-j+1$ times:
this is clear for the first column,
and is easily seen for other columns by induction.

Next we define the permutations.
For $k=1$, $\Pi_1$ contains the only permutation on the one row of $N_1$.
Next we recursively define $\Pi_{k+1}$.
For each $N_k^j$ ($1\le j\le k+1$), we have by recursion an associated set
$\{\pi^{j}_{1},\pi^{j}_{2}\dots \pi^{j}_{k}\}$ of $k$ permutations
(thus, $\pi^{j}_{i}$ is the $i$th permutation from $\Pi_{k}$ relabeled accordingly to $N^j_k$ --
the $j$th copy of $N_k$ in $N_{k+1}$).
Now the permutations in $\Pi_{k+1}$ are defined as follows.
For every $i$ ($1\le i\le k+1$),
the permutation $\pi_i$ is obtained by taking
first the $k!$ rows of $N_k^i$ in any (for example, the natural) order;
then the rows
of $N_k^1 \cup N_k^2 \cup \dots \cup N_k^{i-1}$
in the order determined by the permutations $\pi^{1}_{i-1},\pi^{2}_{i-1}, \dots ,\pi^{i-1}_{i-1}$;
and, finally, the rows
of $N_k^{i+1} \cup N_k^{i+2} \cup \dots \cup N_k^{k+1}$
in the order determined by the permutations
$\pi^{i+1}_{i}, \pi^{i+2}_{i}, \dots, \pi^{k+1}_{i}$.
Clearly, each row was taken once, so $\pi_i$ is indeed a permutation. Also, when processing such a permutation, in the first $k!$ steps we choose all elements $1,2,\dots, k!$, so in the rest the permutation chooses the same elements in each $N_k^j$ ($j\ne i$) as the corresponding permutation ($\pi^{j}_{i-1}$ or $\pi^{j}_{i}$) would choose in $N_k^j$.

Thus by induction it is true that these permutations choose all elements of $N_{k}$.
 Indeed, this is true for $N_1$ and by induction it remains true as for every $N_k^j$
 ($1 \le j\le k+1$) all $\pi^{j}_{i}$ ($1\le i\le k$) is part of some $\pi_u$ ($1\le u\le k+1$).
 Finally, the number of different elements in $N_{k+1}$ is
 $\sum_{i=1}^{k+1}{m/i}$, as we have $k+1$ columns, and in the $j$th column
($1 \le j\le k+1$) each element appears $(k+1)-j+1$ times, thus this column has $\frac{m}{(k+1)-j+1}$ different elements.
\end{proof}

\section{Characterization of avoidable elements}\label{sec:avoid}

In this section we characterize avoidable elements and sets.
Recall that we define $E_x(R)$ as the set of elements left of $x$ in the rows of $R$ (i.e., $y$ is in $E_x(R)$ if and only if there exists a row $r\in R$ in which $y$ appears to the left of $x$; if $x$ does not appear in $r$ then all elements in $r$ are regarded to be left of $x$)

\thmUnavoidable*

\begin{proof}
$[\Rightarrow]$ Let $\tau$ be a POM which does not pick $x$ and
let $R$ be a set of rows. In each row a different element is picked by $\tau$, which is left of $x$.
This shows the claim.

$[\Leftarrow]$
Without loss of generality $x$ is present in all the rows.
Consider the bipartite graph on $A\cup B$, defined by all pairs $(a,b)\in A\times B$ such that $b$ appears in row $a$ before $x$. The above condition says, that for all subsets $R\subset A$ the neighbourhood of $R$ is larger or equal to $R$ in terms of size.

By Hall's theorem, there exists a matching $\tau$ that picks elements to the left of $x$.
We can modify $\tau$ so that each row picks an element farthest to the left in $M$ not chosen by any other row. In other words $\tau$ is a $1$-POM.
By Lemma~\ref{lem:equivNotion} there is a POM $\tau'$ selecting the same set of elements as $\tau$, thus $\tau'$ does not choose $x$, and so $x$ is avoidable.
\end{proof}

%

\corAlgorithmUnavoidable*



\begin{proof}
For an element $x$ to be avoidable is equivalent to the existence of a matching
that connects all the elements of $A$ with elements left to $x$.
Thus, we need to find a maximum bipartite matching.
The fastest known algorithm
(Hopcroft-Karp~\cite{DBLP:journals/siamcomp/HopcroftK73})
checks this in $O(\sqrt{V} E)$ time.
In our case $|V| \leq m+n$, $|E| \leq  m^2$.
Thus, it can be checked whether $x$ is avoidable in $O(m^2\sqrt{m+n} )$ time.
Consequently, listing of all unavoidable elements by checking all the elements of $B$ can be done in $O(m^2n\sqrt{m+n})$ time. The bounds without $n$ follow from the fact that we can limit the number of relevant elements in $B$ easily to $m^2$ (the number of elements in the first $m$ columns of the matrix).
\end{proof}

Recall a set $X$ is said to be \emph{avoidable} if there exists a matching that avoids every element of $X$. Note that it is possible that all elements of a set are avoidable, while the set itself is not.
Theorem~\ref{thm:unavoidable} extends to set $X$, by replacing all elements of $X$ by a single element $x$. If $x$ is avoidable, then $X$ was avoidable and vice versa.
(If more than one element of $X$ appears in a row replace only the leftmost one.)




\section{Complexity of reachability}\label{sec:Compl}
In this section we show that Problems 1 to 5 defined in the introduction lie in the indicated complexity classes.

Problem~\ref{prb:DER} is polynomially solvable, seen as follows: let $M$ be a matrix and $E$ be a set of $m$ elements occurring in the matrix. We define $F:= B \setminus E$, then $E$ is exactly reachable if and only if $F$ is avoidable, which can be checked in polynomial time by Corollary~\ref{thm:unavoidablecompl}.



We start with a brief and informal explanation of the class \#P. A detailed introduction is given by Arora and Barak~\cite{arora2009computational}.

Informally \#P is a class of counting problems. In counting problems
an object is given and the task is to count some other objects satisfying some condition. The only restriction is that the size of the binary representation of the number of objects counted is polynomial in the input. But this is given in all that follows, because we have exponential upper bounds on the number of objects to be counted.

Famous examples of \#P-complete problems are:
\begin{itemize}
   \item How many variable assignments does a given boolean formula satisfy?
   \item How many perfect matchings does a given bipartite graph have?
\end{itemize}

Note that the decision version of the first problem is NP-hard and of the second problem is in P.

A problem $L$ is defined to be \#P-complete if it lies in \#P and there exists a polynomial time counting reduction from all other problems in \#P to $L$. Due to transitivity of counting reductions, it is sufficient to  reduce one \#P-complete problem to $L$.

Finally, a counting reduction transforms the input of one problem to the input of a second problem in a way that the counting problem for the first problem can be solved via the reduction and the solution to the second problem. It will be very clear in our context that our reductions are indeed counting reductions.

We will use the result from Creignou and Hermann~\cite{creignou1993p} that \#$1$-in-$3$-SAT is \#P-complete. Here it is asked how many assignments exist such that every clause of a given $3$CNF-formula has exactly one true and two false literals. We will also use later that the corresponding decision problem is NP-complete.


We start our discussion with Problems~\ref{prb:DR}~and~\ref{prb:CERSS} for $3$-column matrices.
\begin{theorem}\label{thm:ReductionSAT}
    \ Problem~\ref{prb:DR} (Deciding Reachability) is NP-complete and Problem~\ref{prb:CERSS} (Counting Exactly Reachable Supersets)  is \#P-complete, even when $D$ (the set we want to decide if it is reachable) is a $1$-element set and the matrix has only $3$ columns.
\end{theorem}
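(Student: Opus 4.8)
The plan is to reduce from \#1-in-3-SAT (and its NP-complete decision version) to Problem~\ref{prb:DR} / Problem~\ref{prb:CERSS}, producing a matrix with only 3 columns and a 1-element target set $D = \{x\}$. Given a 3CNF formula $\phi$ with variables $v_1,\dots,v_n$ and clauses $C_1,\dots,C_k$, I would build a matrix $M$ whose rows encode two kinds of gadgets: variable gadgets that force a consistent Boolean assignment, and clause gadgets that are satisfiable (as parts of a POM) exactly when the corresponding clause has precisely one true literal, under the 1-in-3 semantics. The single distinguished element $x$ should be placed so that a POM reaches $x$ if and only if every clause gadget is ``satisfied'' by the chosen assignment; equivalently, the number of exactly reachable supersets of $\{x\}$ counts (a quantity easily related to) the number of 1-in-3 satisfying assignments of $\phi$. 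By Lemma~\ref{lem:equivNotion} I may freely think of POMs as greedy matchings, which makes the gadget analysis cleaner: I only need to reason about which element a row grabs first among those not yet taken.

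The key steps, in order: (1) Fix the column budget. With only 3 columns, each row offers at most 3 candidate elements, so each gadget row is a very constrained choice. I would use a "token" element $t_i$ for each variable $v_i$ that can be consumed in one of two ways, corresponding to $v_i = \text{true}$ or $v_i = \text{false}$; the two possible consumers sit in the first two columns of a pair of rows, so that the greedy process is forced to commit to one side. (2) Build clause gadgets. For clause $C_j = (\ell_{j1} \vee \ell_{j2} \vee \ell_{j3})$ I introduce three rows, one per literal, each of which wants a literal-specific element in column 1 if the literal is true and otherwise must fall back; the interaction is arranged so that the gadget's "output" element (feeding towards $x$) is reachable iff exactly one of the three literal-rows took its column-1 choice. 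This is where the 1-in-3 structure is encoded, and it is the natural place to exploit that greedy matchings are rigid. (3) Wire the outputs to $x$: place $x$ in the third column of a chain of rows whose earlier columns are exactly the clause-output elements, so $x$ gets selected precisely when all clause gadgets fired correctly. (4) Verify the two directions of correctness — every 1-in-3 satisfying assignment yields a greedy order reaching $\{x\}$, and conversely any POM reaching $x$ reads off a consistent 1-in-3 assignment — and check that the reduction is a polynomial-time counting reduction, so \#P-completeness for Problem~\ref{prb:CERSS} and NP-completeness for Problem~\ref{prb:DR} follow simultaneously. Finally, note membership in NP / \#P is immediate since a POM is a polynomial-size certificate checkable in polynomial time (greedy verification via Lemma~\ref{lem:equivNotion}).

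The main obstacle I anticipate is designing gadgets that (a) live within the 3-column constraint, (b) force genuinely binary variable choices despite the asymmetry of preference lists, and (c) make the clause gadget detect "exactly one" rather than "at least one" — ordinary SAT-style gadgets would give "at least one," and squeezing the exact-one condition out of greedy selection without extra columns is delicate. A secondary subtlety is the bookkeeping for the counting reduction: the map from satisfying assignments to reachable supersets of $\{x\}$ may not be a bijection (different assignments or different greedy orders could yield the same reachable set), so I would either make the gadget elements "assignment-revealing" — i.e. the reachable superset literally records which side of each variable token was consumed — so that distinct assignments give distinct exactly reachable sets, or else I would argue a clean fixed-multiplicity correspondence. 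Getting that bijection (or controlled multiplicity) right is what turns the NP-hardness construction into a valid \#P-hardness construction, and it is the part I would write out most carefully.
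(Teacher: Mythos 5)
Your choice of reduction source (\#$1$-in-$3$-SAT and its NP-complete decision version) and your overall architecture --- variable gadgets forcing a binary choice via a shared first-column element, clause gadgets that must detect ``exactly one true literal,'' and a final row whose selection of $x$ requires all clause outputs to be consumed --- match the paper's construction (Lemma~\ref{lem:reduction}). The bijection concern you raise at the end is also resolved exactly as you suggest: the exactly reachable set records which of $x_i$, $\overline{x_i}$ was selected, so distinct $1$-in-$3$ assignments give distinct exactly reachable supersets of $\{x\}$, and the correspondence is one-to-one.

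However, the two points you defer as ``the main obstacle'' are precisely where the content of the proof lies, and your proposal does not resolve either of them. First, the exactly-one detection: the paper's clause gadget for $C_j=L_1^j\vee L_2^j\vee L_3^j$ consists of three rows sharing two dummy elements $b_j,c_j$ in their first two columns, where the $i$th row continues with $L_i^j$, the negations of the other two literals, and finally $C_j$. Two of the three rows absorb $b_j$ and $c_j$; the remaining row can advance all the way to $C_j$ only if its positive literal and the two negated literals are all already taken by the variable gadgets, i.e.\ only if exactly that literal is true. This is the concrete mechanism your sketch gestures at (``the interaction is arranged so that\dots'') but does not supply, and ``exactly one'' does not fall out of a generic SAT-style gadget. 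Second, and structurally more important: the paper does \emph{not} design these gadgets inside three columns --- the gadget matrix $M_\Phi$ has six columns. The three-column bound comes from a separate, generic row-splitting transformation: each row $(a_1,\dots,a_m)$ is replaced by a block of three-column rows $(a_1,a_2,\beta_1),(\beta_1,a_3,\beta_2),\dots,(\beta_{m-3},a_{m-1},a_m)$ with fresh chaining elements $\beta_i$, together with a proof that POMs of the original matrix correspond one-to-one with POMs of the block matrix selecting the same original elements. Building assignment-forcing and exactly-one gadgets directly within three columns, as you propose, is substantially harder than what the paper does and is not needed; without either the concrete clause gadget or the row-splitting lemma, the proposal as written cannot be completed.
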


 We will give a transformation that converts $1$-in-$3$-SAT formulas $\Phi$ to matrices $M$ with the special property that the number of good assignments for $\Phi$ equals the number of reachable sets containing some special element $x$.
This transformation will reduce $1$-in-$3$-SAT to Problem~\ref{prb:DR} and \#$1$-in-$3$-SAT to Problem~\ref{prb:CERSS} and thus proves the essential part of Theorem~\ref{thm:ReductionSAT}.
\begin{figure}[htp]
   \[
     M_{\Phi} = \left(\begin{array}{cccccc}
      a_1 & x_1 &  * & *& &\\
      a_1 & \overline{x_1} & * & *& &\\
       \vdots & \vdots & & & &\\
      a_n & x_n &  *& *& &\\
      a_n & \overline{x_n} & *& *\\
       & & & & \\
      b_1 & c_1 & L_1^1& \overline{L_2^1}&  \overline{L_3^1} & C_1\\
      b_1 & c_1 & L_2^1& \overline{L_1^1}&  \overline{L_3^1}& C_1\\
      b_1 & c_1 & L_3^1& \overline{L_1^1}&  \overline{L_2^1}& C_1\\
       & & & & \\
       & & \vdots & \vdots   & \\
       & & & &\\
       b_m & c_m & L_1^m& \overline{L_2^m}&  \overline{L_3^m} & C_m\\
      b_m & c_m & L_2^m& \overline{L_1^m}&  \overline{L_3^m}& C_m\\
      b_m & c_m & L_3^m& \overline{L_1^m}&  \overline{L_2^m}& C_m\\
      & & & &\\
      C_1 & C_2 & \ldots & & C_m & x
    \end{array}\right)
   \] \caption{Illustration of the matrix $M_{\Phi}$}
   \label{fig:Matrix}
   \end{figure}

\begin{lemma}\label{lem:reduction}
  Let $\Phi$ be an instance of $1$-in-$3$-SAT. Then there exists a  matrix $M$ with some element $x$ such that the $1$-in-$3$-satisfying truth assignments of the variables of $\Phi$ are in one-to-one correspondence with the exactly reachable sets $E$ with $x\in E$.
\end{lemma}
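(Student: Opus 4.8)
The plan is to use $M=M_\Phi$ from Figure~\ref{fig:Matrix}, with $x$ the element that occurs only once in $M_\Phi$, namely as the rightmost entry of the bottom row (the starred entries and any padding columns are auxiliary and, as we will see, are never used by a POM that reaches $x$). So for a POM $\tau$, having $x\in s(\tau)$ is the same as the bottom row selecting $x$, which forces every $C_j$ to be selected by some other row. Since $C_j$ occurs only in the three rows of clause-block $j$ (besides the bottom row), some row of block $j$ selects $C_j$, and by Observation~\ref{obs:triv} that row also certifies that $b_j$ and $c_j$ are selected; as $b_j$ and $c_j$ occur only inside block $j$ and its three rows select three distinct elements, block $j$ selects exactly $\{b_j,c_j,C_j\}$. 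In particular, in a POM reaching $x$ no clause row selects a literal entry, and the bottom row carries no literal either, so literal entries are handled only by variable blocks.

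I would next pin down the variable blocks. In block $k$ the two rows share $a_k$ as their first entry, so (Observation~\ref{obs:triv} and injectivity) one of them selects $a_k$, and the other selects its second entry unless that entry is already taken; but the second entry is $x_k$ or $\overline{x_k}$, which besides this row occurs only in clause rows, and those take no literal, so it is available. Hence block $k$ selects exactly $\{a_k,u_k\}$ for some $u_k\in\{x_k,\overline{x_k}\}$, and so $s(\tau)=\{a_k:k\}\cup\{u_k:k\}\cup\{b_j:j\}\cup\{c_j:j\}\cup\{C_j:j\}\cup\{x\}$ is completely determined by the tuple $(u_k)$. Read this tuple as the truth assignment $\alpha$ with $\alpha(x_k)$ true iff $u_k=x_k$; equivalently, a literal $\ell$ is true under $\alpha$ iff the element $\ell$ was selected by its variable block. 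Then $\alpha$ is $1$-in-$3$-satisfying: for each $j$, the row of block $j$ that selects $C_j$ can only reach its last entry $C_j$ because its three literal entries $L_i^j,\overline{L_{i'}^j},\overline{L_{i''}^j}$ are already taken, and since only variable blocks take literal entries this says precisely that $L_i^j$ is true while $L_{i'}^j$ and $L_{i''}^j$ are false, i.e.\ clause $j$ has exactly one true literal.

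For the converse, given a $1$-in-$3$-satisfying assignment $\alpha$, put $u_k=x_k$ if $\alpha(x_k)$ is true and $u_k=\overline{x_k}$ otherwise, and set $E_\alpha:=\{a_k:k\}\cup\{u_k:k\}\cup\{b_j:j\}\cup\{c_j:j\}\cup\{C_j:j\}\cup\{x\}$. I would exhibit a serial-dictatorship (greedy) order realising $E_\alpha$: process all variable blocks first, each so that the row whose second entry is the false literal takes $a_k$ and hence $u_k$ gets selected; then all clause blocks, each so that the two rows whose third entry is a false literal take $b_j$ and $c_j$, leaving the remaining row confronted with $b_j,c_j$ gone and with all three of its literal entries gone too (its third entry is the true literal and its two negated entries are negations of false literals, hence true literals, and all of those were taken by variable blocks), so that it is forced onto $C_j$; finally the bottom row, which now finds all $C_j$ gone and selects $x$. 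A direct check shows this greedy matching has image exactly $E_\alpha$, so by Lemma~\ref{lem:equivNotion} $E_\alpha$ is the image of a POM, hence exactly reachable and contains $x$.

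The desired bijection is then $\alpha\mapsto E_\alpha$: it is well defined and lands among the exactly reachable $x$-containing sets by the converse direction; it is injective since distinct assignments differ in some $u_k$, hence in whether $x_k$ or $\overline{x_k}$ lies in the set; and it is surjective since, by the first two paragraphs, every exactly reachable $E\ni x$ equals $E_\alpha$ for the ($1$-in-$3$-satisfying) assignment read off from which element of each pair $\{x_k,\overline{x_k}\}$ lies in $E$. I expect the main obstacle to be the rigidity of the first two paragraphs — that reaching $x$ forces every clause block onto $\{b_j,c_j,C_j\}$ and every variable block onto $\{a_k,x_k\}$ or $\{a_k,\overline{x_k}\}$, so that literal entries are handled only by variable blocks; once that is in place, keeping the true/false conventions aligned so the forced configuration of each clause block reads as ``exactly one true literal'' (rather than, say, ``exactly two'') is the remaining subtlety, while the greedy construction and the bijection bookkeeping are routine.
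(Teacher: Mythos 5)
Your proof is correct and follows essentially the same route as the paper: the same matrix $M_\Phi$, the forward direction via a greedy/serial-dictatorship order realising $E_\alpha$ (the paper builds the analogous $1$-POM directly and invokes Lemma~\ref{lem:equivNotion}), and the backward direction by reading off the assignment from which of $x_k,\overline{x_k}$ is selected. Your first two paragraphs in fact spell out the rigidity argument (each clause block forced onto $\{b_j,c_j,C_j\}$, each variable block onto $\{a_k,u_k\}$) more explicitly than the paper does, which is a welcome strengthening rather than a deviation.
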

\begin{proof}
   Let $\Phi$ be of the form $ C_1 \wedge C_2 \wedge \ldots \wedge C_m$. And each of the clauses $C_i$ is of the form $L_1^i\vee L_2^i \vee L_3^i$, where each literal $L_j^i$ is one of the variables $x_1,\ldots ,x_n$ or its negation.
   The matrix $M_{\Phi}$ is defined in Figure~\ref{fig:Matrix}.

   Here $a_i, b_i , c_i$ are only used where indicated. The asterisks mark that any element could stand there. Every $x_i$ and its negation get an element named after them. The same holds for every clause $C_i$. The literals $L_i^j$ need to be replaced by the corresponding variable element.

   Let $F : \{x_1,\ldots , x_n\}\longrightarrow \{0,1\}$ be an assignment of the variables such that exactly one literal of each clause in $\Phi$ becomes true. We construct the corresponding matching $\tau_F$. The matching $\tau_F$ selects $x_i$  whenever $F$ gives it the value $1$ and its negation otherwise.

There exists exactly one way to select the element $C_i$ for each $i$.
To see this consider as an example $C = x_1 \vee x_2 \vee x_3$ and assume w.l.o.g. that $x_1$ is true and $x_2$ and $x_3$ are false in assignment $F$. Then $\tau_F$ has already selected $x_1,\overline{x_2}, \overline{x_3}$. Thus $C$ can be selected in the following way, indicated by circles:

\begin{center}
\includegraphics{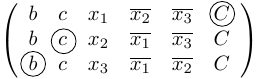}	
\end{center}

In the same way all clause labels can be selected and it is possible to select~$x$.

   In this manner we get from every satisfying assignment $F$ a matching $\tau_F$ selecting $x$. It is easy to see that $\tau_F$ is a $1$-POM. And thus there also exists a POM selecting the same set.
   In this way we get from every satisfying truth assignment a $1$-POMs selecting $x$.

   We show that we can get a satisfying truth assignment for every $1$-POM $\tau$ selecting $x$.
We get a truth assignment by taking $x_i$ to be true if $x_i$ is selected in the first $2n$ rows and $x_i$ false if $\overline{x_i}$ is selected. $\tau$ will always select one of $x_i$ and $\overline{x_i}$.
Clearly, $C_1, \ldots,C_m$ are not taken by the last row.  The only way to select $C_i$ is if one of its three literals is assigned to be true and the other two are assigned to be false.
Thus this assignment is an $1$-in-$3$-satisfying assignment.
\end{proof}

   Let $M$ be some matrix with the element $x$. We define a matrix $M'$ with exactly three columns by replacing each row by a block. Let $r = (a_1 a_2 \ldots a_m)$ be some row, with $m>3$. We replace it by the following block $b_r$:
   \[ \left(
   \begin{array}{ccc}
	a_1 & a_2 & \beta_1 \\
	\beta_1 & a_3 & \beta_2 \\
	\beta_2 & a_4 & \beta_3 \\
	 & \vdots & \\
	 \beta_{m-4} & a_{m-2} & \beta_{m-3} \\
	 \beta_{m-3} & a_{m-1} & a_m  \\
	
   \end{array}
\right)\]
The $\beta_i$ elements are different for every block.
If $\tau$ is a Pareto optimal matching of $M$, then there exists a unique Pareto optimal matching $\tau'$ in $M'$ that selects in each block the same element as $\tau$ in $M$.
To see this correspondence assume element $a_i$ is selected in our row $r$.
Then the elements $a_1,\ldots,a_{i-1}$ are selected somewhere else in $M$.
Associate with this matching of $M$ the matching in $M'$ that in block $b_r$ selects the elements $\beta_1,\ldots,\beta_{i-2}, a_i , \beta_{i-1},\ldots,\beta_{m-3}$ consecutively from the top row to the bottom row.
%
In the special case
that $i = 1$, $i=2$, $i=m-1$ or $i=m$ either the first or the second block of
$\beta$s will be empty.
It is easy to check that this is a Pareto optimal matching of $M'$ and has the above property.
Uniqueness follows from the fact that in each block $b_r$
exactly one of the original $a_i$ must be selected.

In every block at most one element of the original row can be selected by any POM $\tau'$.
Every POM $\tau'$ in $M'$ that selects in every block exactly one element of the original row corresponds to exactly one POM $\tau$ of $M$.
Consider the three column matrix $M'$ of the matrix $M$ in Lemma~\ref{lem:reduction}. Clearly, the POMs selecting $x$ in $M$ stand in a one to one correspondence with the POMs in $M'$ selecting $x$.
The next lemma summarizes our discussion.


\begin{lemma}
  Let $\Phi$ be some boolean $3$-DNF formula. Then there exists a \textbf{three column} matrix $M$ with some element $x$ such that the $1$-in-$3$-satisfying truth assignments of the variables of $\Phi$ are in one-to-one correspondence with the exactly reachable sets $E$ with $x\in E$.
\end{lemma}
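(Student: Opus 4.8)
The plan is to combine the two constructions already established in the excerpt. Lemma~\ref{lem:reduction} produces, from an instance $\Phi$ of $1$-in-$3$-SAT, a matrix $M_\Phi$ (with an unbounded number of columns) together with a distinguished element $x$ such that the $1$-in-$3$-satisfying assignments of $\Phi$ are in bijection with the exactly reachable sets $E$ of $M_\Phi$ containing $x$. The block-replacement construction described immediately after that lemma takes an arbitrary matrix $M$ with a marked element $x$ and produces a three-column matrix $M'$ in which the POMs selecting $x$ stand in bijection with the POMs of $M$ selecting $x$. So the whole proof is: apply the second construction to the matrix $M_\Phi$ output by the first.

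First I would invoke Lemma~\ref{lem:reduction} to obtain $M_\Phi$ and $x$. Next I would apply the block construction to $M_\Phi$, replacing every row $r = (a_1 a_2 \ldots a_m)$ of $M_\Phi$ that has more than three entries by the block $b_r$ displayed above, introducing fresh auxiliary elements $\beta_i$ for each block; rows with at most three entries are left untouched (or padded in the trivial way). Call the resulting three-column matrix $M' = M_\Phi'$. The discussion preceding the lemma already argues that in each block at most one ``original'' element can be selected by a POM, that a POM of $M'$ selecting an original element in every block corresponds to exactly one POM of $M_\Phi$, and that this correspondence is a bijection once we restrict to POMs selecting $x$; the auxiliary elements $\beta_i$ are never part of $x$ and their selection is forced once the original selection in the block is fixed, so the image sets correspond as well. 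Concatenating the two bijections gives a one-to-one correspondence between the $1$-in-$3$-satisfying assignments of $\Phi$ and the exactly reachable sets $E$ of $M'$ with $x \in E$, which is exactly what the lemma asserts (with ``$3$-DNF formula'' read as an instance of $1$-in-$3$-SAT, i.e.\ a $3$CNF in which we seek assignments making exactly one literal per clause true).

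The only real obstacle is bookkeeping: one must check that when $x \in E$ the set $E$ records exactly the ``original-row'' selections across all blocks (never a stray $\beta$ for the purpose of the correspondence), and that distinct satisfying assignments, distinct reachable sets of $M_\Phi$, and distinct reachable sets of $M'$ all line up through the composite map — i.e.\ that nothing collapses or splits. Both component maps were already shown to be bijective in the surrounding text, so this reduces to observing that composition of bijections is a bijection and that the ``$x \in E$'' condition is preserved in both directions (it is: $x$ sits in a length-$3$ bottom row of $M_\Phi$, so it is untouched by the block replacement, and selecting $x$ is the same event in $M_\Phi$ and $M'$). Hence the lemma follows with essentially no new argument beyond assembling the two constructions.
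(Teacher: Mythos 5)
Your proposal is correct and follows essentially the same route as the paper: the paper offers no separate proof of this lemma, merely stating that it ``summarizes our discussion'', which is precisely the composition of Lemma~\ref{lem:reduction} with the block-replacement construction that you describe. The bookkeeping points you flag (the $\beta$'s being forced, $x$ surviving the block replacement, and distinct image sets not collapsing) are exactly the ones the paper's surrounding discussion addresses.
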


\begin{proof}[Proof Theorem~\ref{thm:ReductionSAT}]
\#$1$-in-$3$SAT is \#P-complete. Given a $3$-DNF formula $\Phi$, the reduction above gives a $3$-column matrix, such that the exactly reachable sets containing $x$ are in a one to one correspondence with the satisfying assignments. Thus also Counting Exactly Reachable Supersets is \#P-hard. And as $1$-in-$3$SAT is also NP-complete, deciding Reachability is also NP-hard. Membership to these classes (and thus completeness) follows from the exponential upper ($2^{m^2}$) bound on the number of possible sets. Thus the binary representation is linear in the size of the input.
\end{proof}
The natural question to ask is what happens when there are only \textbf{two columns}. Understanding the structure of reachability for $2$ column matrices gives many interesting results at once.
We consider first Problem~\ref{prb:DR} (Deciding Reachability) and will see easily from the algorithm that we can find an explicit way to count all reachable sets, for $2$ column matrices.
The complexity of counting reachable sets for general matrices remains open.
On the other hand, we will see that it is \#P-complete to count all exactly reachable sets $E$ already for $2$-column matrices.



%


\begin{theorem}\label{thm:polyreachable}
  Problem~\ref{prb:DR} (Deciding Reachability) is in P for $2$-column matrices.
\end{theorem}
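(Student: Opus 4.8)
The plan is to reduce the question to a single maximum bipartite matching computation. For a row $r$ of a $2$-column matrix write $f(r)$ and $g(r)$ for its first and its second choice, and put $C_1=\{f(r):r\in A\}$ and $C_2=\{g(r):r\in A\}$. By Lemma~\ref{lem:equivNotion} it suffices to decide whether some $1$-POM reaches $D$, and in the $2$-column case $1$-POMs admit a very simple description. A partial injective assignment $\tau$ with $\tau(r)\in\{f(r),g(r)\}$ for every matched row $r$ has a blocking coalition of size $1$ precisely when some matched row sits at its second choice while its first choice is unused, or some unmatched row has its first or second choice unused. Hence $\tau$ is a $1$-POM if and only if $C_1\subseteq s(\tau)$ and $g(r)\in s(\tau)$ for every unmatched row $r$; in particular every reachable set contains $C_1$.

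The core claim I would prove is: $D$ is reachable if and only if $D\subseteq C_1\cup C_2$ and the bipartite graph $H$ with parts $C_1\cup D$ and $A$, where $e$ is adjacent to $r$ iff $e\in\{f(r),g(r)\}$, has a matching saturating $C_1\cup D$. For the forward direction, a POM reaching $D$ is a $1$-POM, so by the above its underlying matching saturates $C_1$, and it saturates $D$ by hypothesis, hence it saturates $C_1\cup D$; moreover $s(\tau)\subseteq C_1\cup C_2$ since unmatched rows contribute no element. For the converse, take a matching $\mu$ of $H$ saturating $C_1\cup D$, view it as a partial assignment of rows to elements, and augment it: while there is an unmatched row $r$ whose second choice $g(r)$ is currently unmatched, add the pair $(r,g(r))$. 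Each step keeps $\mu$ a partial injective assignment with $\mu(r)\in\{f(r),g(r)\}$ and preserves saturation of $C_1\cup D$, and the process terminates; in the resulting assignment $\mu^{\ast}$ every unmatched row has its second choice used, so $\mu^{\ast}$ is a $1$-POM reaching $D$, and Lemma~\ref{lem:equivNotion} turns it into a POM reaching $D$.

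Given the claim the algorithm is immediate: reject if $D\not\subseteq C_1\cup C_2$; otherwise build $H$ (it has $O(m)$ vertices and $O(m)$ edges) and compute a maximum matching, e.g.\ via Hopcroft--Karp~\cite{DBLP:journals/siamcomp/HopcroftK73}, accepting iff its size equals $|C_1\cup D|$, which is clearly polynomial. Equivalently the condition can be phrased as a Hall-type inequality $|N(S)|\ge|S|$ for every $S\subseteq C_1\cup D$, where $N(S)$ is the set of rows having an element of $S$ among their two choices; this reformulation is what will later yield the explicit counting formula for $2$-column matrices. The only delicate points are the local characterisation of $1$-POMs in the $2$-column case and the observation that the side condition "$g(r)$ is used whenever row $r$ is unmatched" never obstructs reachability once $C_1\cup D$ is saturable — which is exactly what the greedy augmentation step establishes; the matching-theoretic part is then routine.
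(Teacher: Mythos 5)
Your proof is correct, but it takes a genuinely different route from the paper's. The paper works component by component in the bipartite row--element graph, using the fact that a connected component on $m_i$ rows has at most $m_i+1$ elements: if the component is a tree, one element must be left out and any single avoidable element can be the one left out (so $D$ is reachable iff it omits at least one avoidable element of each tree component); if the component contains a cycle, all of its elements can be selected at once. You instead characterize $1$-POMs of a $2$-column matrix locally (every first choice must be used, and every unmatched row must have both of its choices used) and reduce reachability of $D$ to the existence of a matching saturating $C_1\cup D$ in an auxiliary bipartite graph, checked by one Hopcroft--Karp run; your greedy augmentation step correctly shows that the side condition on unmatched rows never obstructs reachability once $C_1\cup D$ is saturable, and Lemma~\ref{lem:equivNotion} is invoked exactly as the paper itself does in this incomplete-list setting. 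Both arguments are sound and polynomial. What the paper's structural tree-versus-cycle analysis buys is the explicit product formula of Theorem~\ref{lem:numberSelectable} for counting reachable sets, which falls out of the case distinction; your closing remark that the Hall-type reformulation ``will later yield'' that counting formula is optimistic, since the count really comes from the component decomposition rather than from Hall's condition. What your reduction buys is a single clean certificate (a saturating matching) with no case analysis, and it sits closer in spirit to the Hall-condition characterization of avoidable elements in Theorem~\ref{thm:unavoidable}.
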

\begin{proof}
We assume the bipartite row element graph $G$ is connected, otherwise we treat each component separately.
Let $m$ denote the number of rows, $D$ the set of elements we wish to reach, and  $X$ the set of elements of $G$.

   It is easy to see, by induction on $m$, that the number of different elements in $G$ is at most $m+1$. (This is true for $m= 1$ and at most one new element is added when a new row is added.)

   We distinguish two cases.
   \begin{enumerate}[label = Case (\arabic*), leftmargin=*]
   		\item  $|X| = m+1$
      \item  $|X| \leq m$
   \end{enumerate}

   In the first case we cannot reach $X$ but we will show that for every avoidable element $x\in X$ we can reach $X\setminus \{x\}$. This implies $D$ is reachable if and only if it does not contain all avoidable elements of this component.

   In the second case we will show that $X$ (and subsequently $D$) can be selected.
	
In the first case, there is one more element than the number of rows and thus it is clear that at least one avoidable element cannot be selected.
Further, the number of vertices is $2m+1$ and the number of edges is $2m$. This implies that $G$ is a tree. Let $x$ be any avoidable element not in $D$. We orient all edges away from $x$ in $G$. Clearly the in-degree of every element is exactly one. For every edge oriented from row $r$ to element $e$, let $r$ select $e$. As $x$ is avoidable, it is not in the first column. As all elements, except $x$ is selected there is no $1$-blocking coalition.
Thus the described matching is a $1$-POM, by Lemma~\ref{lem:equivNotion} there is a POM selecting $D\subseteq X\setminus \{x\}$.

Consider now case 2.
As every row is incident to exactly two elements the number of edges is exactly $2m$, and as the number of vertices is at most $2m$, $G$ contains a cycle.
Let $r_0,e_0,r_1,e_1, \ldots , r_t,e_t$ be our cycle.
We assign $r_i$ to $e_i$. In this way all elements on the cycle are selected.

We will repeat in the following. Pick a row $r$ that has at least one of the elements already selected. Such a row exists because we assume that $G$ is connected. There is at most one element which can be selected by $r$. Let $r$ select this element if possible.

This procedure leads to a matching $\tau$ that selects all elements and thus $\tau$ is automatically $1$-POM, by Lemma~\ref{lem:equivNotion} there is a POM selecting $D\subseteq X$.


All the above steps are computationally easy.
\end{proof}

    Let $M$ be a $2$ column matrix.   We assume the corresponding bipartite row element graph has $k$ components. Denote by  $a_i$ the number of avoidable elements $E_i$ of the $i$th component and by $u_i$ the number of unavoidable elements $F_i$. Recall that it is easy to check if an element is avoidable
    by Corollary~\ref{thm:unavoidablecompl}.
    Further $\chi_i$ is the indicator variable for the event that the $i$th component is a tree. Recall that every subset of a reachable set is reachable.
    \begin{theorem}\label{lem:numberSelectable}
       With the notation and assumptions above:
       \[ |\{ E: E \textrm{ is reachable }\}|  =
        \prod_{i=1}^{k} \left(2^{a_i}-\chi_i\right)2^{u_i}\]
        This implies Problem~\ref{prb:CR} (Counting Reachable Sets) is in P for $2$ column matrices.
    \end{theorem}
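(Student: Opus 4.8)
The plan is to count reachable sets by exploiting the fact that reachability is a "downward-closed" property combined with a complete understanding of each connected component of the bipartite row element graph $G$. Since a matching of $M$ splits across the components of $G$, and a set $E$ is reachable if and only if $E \cap X_i$ is reachable within component $i$ for every $i$, the total count factorizes as a product over components. So it suffices to count, for a single component with vertex set $X_i$ (elements) and $m_i$ rows, how many $E \subseteq X_i$ are reachable; then multiply.

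The key step is the per-component count, which falls out of the case analysis in the proof of Theorem~\ref{thm:polyreachable}. In Case (2), where $|X_i| \le m_i$, the whole set $X_i$ is reachable, hence (since subsets of reachable sets are reachable, by Observation~\ref{obs:triv} and Lemma~\ref{lem:equivNotion}) \emph{every} subset of $X_i$ is reachable; the number of such subsets is $2^{|X_i|} = 2^{a_i + u_i} = (2^{a_i} - 0)\,2^{u_i}$, matching the formula with $\chi_i = 0$ since such a component is not a tree (it has $2m_i$ edges on at most $2m_i$ vertices, so it contains a cycle). In Case (1), where $|X_i| = m_i + 1$, the component is a tree; here the proof shows $X_i \setminus \{x\}$ is reachable for every avoidable $x$, and conversely no set containing all avoidable elements of the component can be reachable (a reachable set has size exactly $m_i$ within the component once we account for the extra column trick, and more directly: if $E$ contained every avoidable element of $X_i$, then $X_i \setminus E$ consists only of unavoidable elements, which must all be selected, forcing $|E| \le m_i - u_i$; but one checks the only candidate of the right size would be $X_i$ itself minus a single unavoidable element, impossible). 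So the reachable subsets of $X_i$ are exactly those $E \subseteq X_i$ that omit at least one avoidable element: there are $2^{|X_i|} - 2^{u_i} = 2^{a_i + u_i} - 2^{u_i} = (2^{a_i} - 1)\,2^{u_i}$ of them, matching the formula with $\chi_i = 1$.

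Putting the two cases together gives, for each component $i$, exactly $(2^{a_i} - \chi_i)\,2^{u_i}$ reachable subsets of $X_i$, and multiplying over the $k$ components yields the claimed product. For the algorithmic consequence: the components of $G$ are found in linear time; within each component, avoidability of each element is decided in polynomial time by Corollary~\ref{thm:unavoidablecompl} (indeed trivially here, since an element of a $2$-column component is avoidable iff it does not lie in the first column of any of that component's rows, except we must handle the tree/cycle distinction, which is a single edge-count); and $\chi_i$ is read off from whether $|X_i| = m_i + 1$. Then the product is computed in polynomial time, and since the answer is at most $2^{n}$ its binary representation has polynomial size, so Problem~\ref{prb:CR} is in P for $2$-column matrices.

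The main obstacle I anticipate is nailing down the converse direction in Case (1) cleanly — i.e.\ arguing that a reachable set within a tree component must omit an avoidable element — because one has to be careful about what "reachable set" means for a component that, on its own, has more elements than rows (the extra-column padding from Subsection~\ref{sibsec:prelim} is what makes sizes work out globally). The cleanest route is: a POM of $M$ selects exactly one element per row, so within component $i$ it selects exactly $m_i$ of the $m_i + 1$ elements, missing exactly one; that missing element cannot be in the first column of all rows (it must be missable), i.e.\ it is avoidable; hence the reached subset of $X_i$ omits an avoidable element, and every subset of it does too. Everything else is bookkeeping.
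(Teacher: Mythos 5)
Your proposal is correct and follows essentially the same route as the paper: factorize the count over the connected components of the bipartite row element graph, then use the tree/cycle case analysis from the proof of Theorem~\ref{thm:polyreachable} to get $(2^{a_i}-\chi_i)2^{u_i}$ reachable subsets per component. Your treatment of the converse in the tree case (a POM misses exactly one element of the component and that element must be avoidable) is slightly more explicit than the paper's, which simply asserts that at least one avoidable element must be omitted, but the substance is identical.
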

    \begin{proof}
      It is clear that we can look at each component of the bipartite row element graph separately.
      Consider the case that component $i$ is a tree.
      We know from the proof of Theorem~\ref{thm:polyreachable} that every proper subset of $E_i$ is reachable, i.e., at least one element must not be chosen. These are $2^{a_i} -1$ subsets. From the unavoidable elements all subsets can be chosen.
      Consider now the case that component $i$ has a cycle. Then we know that all elements from this component can be chosen at once. This leads to $2^{a_i+u_i}$ reachable subsets.
      As the choice is independent for each component, we take the product.
    \end{proof}
    \begin{theorem}\label{thm:problem5}
      \  Problem~\ref{prb:CER} (Counting Exactly Reachable Sets) is \#P-complete already for $2$-column matrices.
    \end{theorem}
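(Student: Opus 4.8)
The plan is to give a parsimonious counting reduction from a known \#P-complete problem, and the natural candidate — mirroring the spirit of the Cechl\'arov\'a et al.\ reduction mentioned in the introduction, and the fact that $2$-column matrices correspond to the bipartite row element graph $G$ — is to reduce from counting some vertex-cover-like or independent-set-like quantity. Concretely, I would reduce from the problem of counting the number of maximal independent sets (equivalently, minimal vertex covers, equivalently perfect/maximum matchings in a suitable auxiliary structure) in a graph, or even more directly from counting antichains / downsets, which are all \#P-complete. The key observation to exploit is Theorem~\ref{lem:numberSelectable} and, more importantly, the structural dichotomy in the proof of Theorem~\ref{thm:polyreachable}: in a tree component with vertex set $X$, the reachable sets are exactly the proper subsets of $X$ that respect the ``pick all unavoidable, miss at most the right things'' constraint, and crucially an {\em exactly} reachable set of size $m$ (in the padded sense) must miss exactly one element — so the exactly reachable sets of a $2$-column instance are governed by which single elements can be simultaneously missed across components, which is a combinatorial constraint we can engineer.

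First I would set up the encoding: given a graph $H=(V,E)$ for which we want to count, say, vertex covers (this is \#P-complete), I build a $2$-column matrix whose bipartite row element graph realizes $H$ (or a subdivision/gadgeted version of it) as its row element graph, using the fact that a row with two entries $u,v$ is precisely an edge between elements $u$ and $v$, with the row's preference order orienting which of $u,v$ is ``first''. By choosing the orientations and adding pendant gadget rows, I can arrange that each vertex element is avoidable exactly when the corresponding vertex is ``allowed to be dropped'', and that a matching (hence a POM, via Lemma~\ref{lem:equivNotion}) exactly reaches a set $E$ iff the complement $V\setminus E$ of dropped elements forms an independent set / the chosen elements form a vertex cover of $H$. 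The padding columns $c_1,\dots,c_{m-2}$ from Subsection~\ref{sibsec:prelim} absorb the size bookkeeping so that ``exactly reachable'' becomes ``reaches all of $X$ except a valid dropped set''.

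Then the main technical step is to verify the one-to-one (or at least known-multiplicity, since counting reductions allow multiplication by a computable factor) correspondence between exactly reachable sets of $M$ and the combinatorial objects of $H$: one direction uses the constructive matchings from the proof of Theorem~\ref{thm:polyreachable} (build the matching that drops exactly the chosen independent set, check it is a $1$-POM, invoke Lemma~\ref{lem:equivNotion}); the other direction uses Theorem~\ref{thm:unavoidable} / the Hall-type characterization to show that any POM can only drop a set of elements that is ``independent'' in the required sense, because dropping two adjacent elements would violate the Hall condition on the row shared by them. Finally, membership in \#P is immediate since the number of exactly reachable sets is at most $2^{m^2}$, so its binary encoding is polynomial, and the verification of a single witness POM is polynomial; combined with the reduction this gives \#P-completeness, and since the construction uses only $2$ columns the sharper statement follows.

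The hard part will be pinning down the gadget so that the correspondence is exactly bijective rather than merely polynomially related with a known factor — in particular controlling boundary effects of trees versus cyclic components (the $\chi_i$ term in Theorem~\ref{lem:numberSelectable}) and ensuring that the ``first column'' forced choices (unavoidable elements, which must always be reached) do not accidentally collapse distinct independent sets of $H$ to the same reachable set or rule out valid ones. I expect the cleanest route is to reduce from a problem like \#Independent-Set on a graph where every vertex can be made avoidable by attaching a private pendant edge-row, so that the tree/cycle dichotomy is uniform across the instance and the bijection is transparent; if exact bijectivity proves fiddly, falling back to a weighted or multiplicity-preserving counting reduction (still perfectly valid for \#P-completeness) removes the obstacle entirely.
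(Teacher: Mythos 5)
Your high-level plan coincides with the paper's: reduce from \#Independent Set, encode the input graph in the bipartite row element graph of a $2$-column matrix, and argue that the elements a POM fails to select form an independent set because two adjacent dropped elements would let the row joining them improve. However, as written the proposal has a genuine gap: you never commit to a gadget, and the most direct encoding you describe (``a row with two entries $u,v$ is precisely an edge between elements $u$ and $v$'', one row per edge of $H$) does not yield the claimed bijection. In that encoding the dropped vertices do form an independent set, but the converse fails: for a triangle $uvw$ the three rows $(u,v),(u,w),(v,w)$ can only exactly reach $\{u,v,w\}$, so of the four independent sets only the empty one is realized. The forward direction --- that \emph{every} independent set $W$ gives rise to an exactly reachable set --- is the substantive half of the proof and needs both a correct gadget and a constructive argument; your proposal only gestures at ``pendant gadget rows'' and a fallback to a multiplicity-preserving reduction without establishing what the multiplicity would be.

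The paper's fix is a subdivision gadget: each edge $e=(u,v)$ of $G$ becomes its own element, with two rows $(e,u)$ and $(e,v)$. This makes every edge-element unavoidable, guarantees that at most one endpoint per edge is selected by its two rows, and --- via an orientation of a spanning tree away from a vertex of $W$, adjusted so that every vertex of $W$ has in-degree zero --- shows that $E\cup(V\setminus W)$ is exactly reachable for every \emph{non-empty} independent set $W$. The empty independent set corresponds to $E\cup V$, which is exactly reachable only if the component contains a cycle, so the count of exactly reachable sets equals the number of independent sets possibly minus one, a correction that is efficiently computable; this is precisely the ``known-multiplicity'' issue you anticipated, but it must be (and in the paper is) resolved explicitly rather than assumed away. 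Also, your intermediate claim that an exactly reachable set ``must miss exactly one element'' per component holds only for tree components and is false for the construction actually needed, where the number of dropped elements equals $|W|$.
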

    We will show \#P-hardness for $2$-column matrices. By adding $k-2$ columns each consisting of a single new element we get a $k$-column matrix with the same number of exactly reachable sets. Thus the problem is also \#P-hard for $k$-column matrices with $k\geq 2$.

\begin{proof}
    We reduce the problem from the \#Independent Set Problem, which is \#P-complete. This is the problem of counting the number of independent sets in a given graph. Membership in \#P follows from the exponential bound ($2^{m^2}$) on the number of exactly reachable sets.
    More on the complexity of counting independent sets in a graph can be found in~\cite{vadhan1995complexity,greenhill2000complexity}.

    Let $G=(V,E)$ be some connected graph. We construct a two column matrix $M_G$ with the property that non-empty independent sets are in one-to-one correspondence with exactly reachable sets.
        The elements of $M_G$ are the edges and the vertices of $G$. For each edge $e = (u,v)$ we insert two rows $(e,u)$ and $(e,v)$.
This implies, that the edges are unavoidable and for every edge $e$ at most one of its vertices will be selected in the two rows corresponding to $e$.

    Let $W\subseteq V$ be some non-empty independent set. The set $X_W$ is defined as $X_W = E \cup (V \setminus W)$. We show: $X_W$ is exactly reachable.
    We do this in three small steps. First we define an orientation on $G$. Then we define a permutation $\pi$ on $M_G$. At last we will argue that the greedy matching $\tau_\pi$ defined by $\pi$ selects exactly $X_W$.
    Let $T$ be some spanning tree of $G$ and let $x$ be any element in $W$. We orient every edge away from $x$ in $T$, to get an orientation $O_1$ on $T$. Note except for $x$ every vertex has indegree $1$ in $T$.

    We construct $O_2$ from $O_1$ by changing the orientation of each edge incident to $W$ such that the in-degree of every vertex in $W$ is zero and orient every remaining edge in $G$ arbitrarily. For this orientation holds $v\in W$ if and only if $indegree(v) = 0$.

Let $\pi$ be any permutation such that the following holds:
for every edge $e = (u,v)$ oriented towards $v$, row $(e,u)$ is before $(e,v)$ in $\pi$.
This implies that the vertices not selected by $\tau_\pi$ are exactly those with in-degree zero. This shows $X_W$ is exactly reachable.


    Conversely, let $X$ be some exactly reachable set. All edge elements are unavoidable, thus are in the set $X$. Further the vertex elements $W$ not selected form an independent set in the graph. To see this observe that two adjacent vertex elements cannot be avoided simultaneously.
    Note that $W$ can be empty.

    We have shown that every non-empty independent set $W$ in $G$ corresponds to an exactly reachable set $X$ in $M_G$.
    Note that the empty set is also independent in $G$ and would correspond to $X = E\cup V$.
    Note $E\cup V$ is not necessarily exactly reachable, but we can easily check if this is the case.
    (We leave it as an exercise to check that this is the case whenever $G$ is a tree.)

    Thus the number of exactly reachable sets equals the number of independent sets, maybe minus $1$. And we can determine efficiently if this ``minus $1$'' is the case.
   \end{proof}

This last theorem also implies \#P-hardness of Problem~\ref{prb:CERSS} (Counting Exactly Reachable Supersets) for $2$-column matrices with $D=\varnothing$.

\section{Multi-matchings and miscellany}\label{sec:esmm}

Our notions about matchings can be generalized naturally to the case of {\em multi-matchings}, defined in the following way. Additional to $A$ and $B$ and the preference lists of members of $A$ (i.e., $M$), we are given a list of prescribed degrees $L \subset \N $. For each $a\in A$, there exists a degree $\ell_a \in L$ associated with $a$. We are interested in subgraphs of the complete bipartite graph between $A$ and $B$ where the degree of every $a$ is exactly $\ell_a$. Clearly, $\ell_i=1$ for all $i$ gives the case of matchings.
This can be pictured such that each member of $a$ has to be matched with $\ell_a$ members of $B$.
In the matrix terminology it means that in row $a$ we select $\ell_a$ positions. Note that for the case of multi-matchings a matrix may have more than $m$ relevant columns. In a given row $r$ we say that a set of positions $P=\{p_1,p_2,\dots ,p_{\ell_r}\},p_1<p_2<\dots <p_{\ell_r}$ (a position is smaller than another if it is to the left of the other) is {\em better} than another, different set $Q=\{q_1,q_2,\dots ,q_{\ell_r}\},q_1<q_2<\dots <q_{\ell_r}$ (both of size $\ell_r$) if $p_i\le q_i$ for every $i$ (and $p_i< q_i$ for at least one $i$).
(Note that  two sets of positions might be incomparable.)

A \emph{blocking coalition} of a multi-matching $\tau$ is a subset $A'$ of $A$ such that there exists a multi-matching $\tau'$ that differs from $\tau$ on all elements of $A'$ and the selected set of positions is better in $\tau'$ than in $\tau$ on $A'$.
If there exists no blocking coalition (resp.\ of size $i$), we call the matching $\tau$ an \emph{Pareto optimal multi-matching} (POMM) (resp.\ $i$-POMM).

A multi-matching $\tau$ is \emph{greedy} if there exists a multiset permutation $\pi$ of the multiset of rows in which each row $r\in A$ appears $\ell_{r}$ times, for all $r$, such that the multi-matching can be generated in the following manner: we process the rows of $M$ in the order determined by $\pi$, and in each step we pick the leftmost element in the current row that was not picked earlier.


First we present a general method how to generalize statements about POMs to statements about POMMs. Let $(M, L)$ be a pair where $M$ is a matrix with $m$ rows and $L=(\ell_1, \dots, \ell_m)$ is the list of degrees.
Let $M'$ be the matrix with $\sum_{1 \leq r \leq m} \ell_r$ rows
in which the $r$th row of $M$ appears $\ell_r$ times.
We can label the rows of $M'$ by pairs $(\alpha, \beta)$
where $1 \leq \alpha \leq m$,
$1 \leq \beta \leq \ell_\alpha$
(denote the set of all such pairs by $S$).
The row of $M'$ labeled by $(\alpha, \beta)$ is the $\beta$th occurrence of the $\alpha$th row of $M$.

\begin{lemma}\label{multi}
The sets and the elements that can be chosen by greedy matchings from $M'$
are the same as
the sets and the elements that can be chosen by greedy multi-matchings with degree list $L$ from $M$.
\end{lemma}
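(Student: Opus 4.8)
The plan is to exhibit an explicit bijection between greedy multi-matchings of $(M,L)$ and greedy matchings of $M'$ that preserves the set of selected elements (and hence, a fortiori, each individual selected element). The key observation is that the two processes are literally the same process, once one identifies a multiset permutation $\pi$ of the rows of $M$ (in which row $r$ occurs $\ell_r$ times) with an ordinary permutation $\pi'$ of the rows of $M'$. Indeed, an occurrence of row $r$ in $\pi$ corresponds to one of the $\ell_r$ copies $(r,1),\dots,(r,\ell_r)$ of that row in $M'$; reading the occurrences of row $r$ in $\pi$ from left to right and assigning them the copies $(r,1),(r,2),\dots$ in that order gives a well-defined permutation $\pi'$ of the rows of $M'$, and this assignment is a bijection between multiset permutations of the rows of $M$ and those permutations $\pi'$ of the rows of $M'$ in which the copies of each row appear in increasing order of their second coordinate. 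Since every permutation of the rows of $M'$ can be brought into this ``canonical'' form by permuting equal rows among themselves without changing the resulting greedy matching, every greedy matching of $M'$ is $\tau_{\pi'}$ for some canonical $\pi'$.

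The main step is then to verify, by induction on the length of the common prefix processed so far, that running the greedy multi-matching procedure on $M$ with $\pi$ and running the greedy matching procedure on $M'$ with $\pi'$ pick exactly the same elements at each step. At step $t$ the $t$th entry of $\pi$ is some row $r$ (say its $\beta$th occurrence), the $t$th entry of $\pi'$ is the copy $(r,\beta)$, both procedures have selected exactly the same set $S_{t-1}$ of elements so far by the inductive hypothesis, and both pick the leftmost element of row $r$ not in $S_{t-1}$; hence they pick the same element and $S_t$ is again common. After all $\sum_r \ell_r$ steps, the greedy matching $\tau_{\pi'}$ of $M'$ and the greedy multi-matching of $M$ given by $\pi$ have the same image set. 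This shows every set (and element) reachable by a greedy matching of $M'$ is reachable by a greedy multi-matching of $(M,L)$, and conversely.

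One should also note that the procedure for $(M,L)$ indeed produces a legal multi-matching, i.e.\ row $r$ ends up incident to $\ell_r$ distinct elements: its $\ell_r$ occurrences in $\pi$ each pick an element not picked at any earlier step, in particular not at an earlier occurrence of row $r$, so the $\ell_r$ chosen elements are pairwise distinct; symmetrically, in $M'$ the $\ell_r$ copies $(r,1),\dots,(r,\ell_r)$ receive distinct elements for the same reason, matching the fact that $\tau_{\pi'}$ is an injective assignment on the rows of $M'$. I do not expect any real obstacle here: the content of the lemma is essentially a bookkeeping identity between a multiset-permutation view and an expanded-matrix view of the same serial-dictatorship process, and the only mild care needed is to argue that restricting to canonical permutations $\pi'$ of $M'$ loses no greedy matchings (which follows because reordering identical rows of $M'$ within $\pi'$ does not affect the sequence of choices made).
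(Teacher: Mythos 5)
Your proposal is correct and follows essentially the same route as the paper: both set up the bijection between multiset permutations of the rows of $M$ and permutations of the rows of $M'$ (modulo reordering copies of the same row) and observe that the two serial-dictatorship processes make identical choices step by step. You merely spell out in more detail the induction and the legality check that the paper dismisses as ``clear from the definitions.''
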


\begin{proof}
In a multiset permutation $\pi$ of $[m]$, where $r$ can be chosen $\ell_r$ times,
replace the $\beta$th occurrence of $\alpha$ by the pair $(\alpha, \beta)$.
In this way $\tilde{\pi}$, a permutation of $S$, is obtained.
The correspondence $\pi \leftrightarrow \tilde\pi$ is clearly a bijection (up to equivalence on permuting in $\tilde\pi$ the occurrences of copies of the same row).

It is clear from the definitions that the set chosen by $\pi$ from $M$
is equal to the set chosen by $\tilde\pi$ from $M'$.
\end{proof}

Using this observation, our results about POMs can be easily generalized to POMMs.
 Recall the differences between POMs and POMMs.
 Additional to $A$,$B$ and $M$, we are given a set of integers
 $L=(\ell_1,\dots \ell_m)$.
 In the definition of greedy-POMMs instead of
 permutations we regard multiset permutations that are sequences of rows which have length $\ell=\ell_1+\ell_2+\dots+\ell_m$ and contain row $i$ as often as $\ell_i$.
 The $\ell_i$ appearances of a given row are not necessarily consecutive in the sequence.
 Observation \ref{obs:triv} and Lemma \ref{lem:equivNotion} generalize easily to this
 setting. Also, Theorem \ref{thm:elements} can be
 generalized for this case. We summarize the generalization of the latter in the
 following theorem.

\begin{theorem}\label{thm:ESMM}
Given $A$,$B$,$M$ and $L$ as above. Let $T$ be a set of $k$ POMMs and $E(T)$ the set of elements selected by at least one multi-matching of $T$, then \[\left| E(T) \right| \leq \sum_{i=1}^{\min(k,m)}{\left\lfloor \frac{\ell}{i}\right\rfloor}.\]
\end{theorem}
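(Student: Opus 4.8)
The plan is to combine two ingredients: a reduction to the ordinary (single) matching case via the auxiliary matrix $M'$ of Lemma~\ref{multi}, and a ``no redundancy'' argument which is exactly what is needed to replace the number $\ell$ of rows of $M'$ by $m$ in the final sum. Recall $M'$ is the matrix with $\ell=\ell_1+\dots+\ell_m$ rows obtained by repeating the $r$th row of $M$ exactly $\ell_r$ times; by Lemma~\ref{multi} together with the generalized Lemma~\ref{lem:equivNotion} (in its multi-matching form, and in its ordinary form applied to $M'$), a set is the image $s(\tau)$ of some POMM of $(M,L)$ if and only if it is the image of some POM of $M'$. Only the first $\ell$ columns of $M'$ are relevant, so $M'$ may be regarded as an $\ell\times\ell$ matrix, and the auxiliary claim proved inside the proof of Theorem~\ref{thm:elements} applies to it verbatim: any family of $j$ POMs of $M'$ reaches at most $\sum_{i=1}^{j}\lfloor \ell/i\rfloor$ elements.

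Given a set $T$ of $k$ POMMs, I would first pass to a \emph{minimal} subfamily $T_0\subseteq T$ with $E(T_0)=E(T)$. By minimality every $\tau\in T_0$ is \emph{essential}: it selects some element $e_\tau$ selected by no other member of $T_0$. The key claim is that each essential $\tau$ ``owns'' an original row. Write $R_\sigma(r)$ for the rightmost position $\sigma$ selects in row $r$, and $p_r:=\max_{\sigma\in T_0}R_\sigma(r)$; if $(r,p)$ is a position of row $r$ where $\tau$ selects $e_\tau$, then $p\le R_\tau(r)\le p_r$, and I claim $\tau$ is the unique member of $T_0$ attaining $R_\tau(r)=p_r$. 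Indeed, if $R_\sigma(r)=p_r$ for some $\sigma\neq\tau$, then $\sigma$ selects the element at position $p_r$ of row $r$ and, by the generalized Observation~\ref{obs:triv}, every element of row $r$ to the left of $p_r$; in particular $\sigma$ selects $e_\tau$ (which sits at position $p\le p_r$), contradicting essentiality. Hence $\max_{\sigma}R_\sigma(r)$ is attained only by $\tau$, i.e.\ $\tau$ owns row $r$. Since each original row is owned by at most one member of $T_0$, this forces $|T_0|\le m$; together with $|T_0|\le|T|=k$ it gives $|T_0|\le\min(k,m)$.

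Passing $T_0$ through the reduction of the first paragraph produces a family $T_0'$ of at most $|T_0|$ POMs of $M'$ with $E(T_0')=E(T_0)=E(T)$, and the auxiliary claim for $M'$ with $j=|T_0'|\le|T_0|\le\min(k,m)$ yields
\[
|E(T)| \;=\; |E(T_0')| \;\le\; \sum_{i=1}^{|T_0'|}\left\lfloor \frac{\ell}{i}\right\rfloor \;\le\; \sum_{i=1}^{\min(k,m)}\left\lfloor \frac{\ell}{i}\right\rfloor,
\]
which is the desired bound. The main obstacle — the only genuinely new point beyond Theorem~\ref{thm:elements} — is the ``essential POMM owns a row'' step, which forces $|T_0|\le m$ rather than merely $|T_0|\le\ell$; a bare application of Theorem~\ref{thm:elements} to $M'$ would only give $\sum_{i=1}^{k}\lfloor\ell/i\rfloor$. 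The care there is purely in position-versus-element bookkeeping: a POMM may select several positions of a single row, so the argument must single out the \emph{rightmost} one and invoke the generalized Observation~\ref{obs:triv} to let one POMM inherit another's selected element. Everything else — that $T_0$ inherits $E(T_0)=E(T)$, that the correspondence $T_0\leftrightarrow T_0'$ preserves images, and that the proof of Theorem~\ref{thm:elements} reads off for the $\ell$-row matrix $M'$ with ``$m$'' replaced by ``$\ell$'' — is routine.
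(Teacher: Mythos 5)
Your proof is correct. The paper itself gives no argument for Theorem~\ref{thm:ESMM} (it only says the proof ``follows closely'' that of Theorem~\ref{thm:elements}), so there is no official proof to match line by line, but your route is a legitimate and slightly different packaging of the same core idea. You rightly observe that the obvious shortcut --- Lemma~\ref{multi} plus the claim inside Theorem~\ref{thm:elements} applied to the $\ell$-row matrix $M'$ --- only yields $\sum_{i=1}^{\min(k,\ell)}\lfloor \ell/i\rfloor$, and that the entire new content of the theorem is the cap $\min(k,m)$. Your ownership argument (an essential POMM is, by the generalized Observation~\ref{obs:triv}, the \emph{unique} member of $T_0$ attaining the rightmost reached position of the row containing its private element) is exactly the per-row bookkeeping that a direct rerun of the induction also needs: there one shows that the privately selected elements of the members of $T$ lie in pairwise disjoint sets of rows, each row $r$ contributing at most $\ell_r$ such elements, so that the total is at most $\ell$ and the pigeonhole step goes through with $\lfloor \ell/k\rfloor$, while members owning no row can be discarded once $k>m$. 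So the one genuinely new idea is shared; you apply it once as a preprocessing step (prune $T$ to a minimal subfamily of size at most $\min(k,m)$, then reduce to $M'$ and quote the single-matching claim verbatim), whereas the direct proof threads it through the induction. Your version has the advantage of reusing Theorem~\ref{thm:elements} as a black box; the direct version avoids invoking Lemma~\ref{multi} and the generalized equivalence lemma. Both are sound, and your write-up correctly identifies and fills the only non-routine gap.
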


We leave the proofs for the interested reader as they follow closely the corresponding proofs for POMs.
For $k=m$ this theorem is again asymptotically tight (Example \ref{ex:mlogmExample} remains trivially good, yet note that the asymptotics depends on $L$) and for fixed $k$ and big $m$ it is tight if $\ell_1=\ell_2=\dots=\ell_m$.
Indeed, in this case
the construction from Claim~\ref{cl:mlgmtight} below
can be inflated such that for every element $a$ in the example we introduce $\ell_1$ new elements $a^1,a^2,\dots a^{\ell_1}$ and in the example matrices we change every element $a$ to the ordered list of elements $a^1,a^2,\dots a^{\ell_1}$. This way the number of columns is multiplied by $\ell_1$. In the examples the relevant permutations are also inflated similarly, when in a step we would choose row $r$ now instead we choose the same row $r$ as often as  $\ell_1$. Note that in this case (when all $\ell_1$s are the same) this trick can be used in Example \ref{ex:mlogmExample} as well, and the asymptotics become independent of $L$.


Theorem~\ref{thm:unavoidable} can be generalized for multi-matchings as follows.

\begin{theorem}\label{lem:avoidable}
Let $(M, L)$ be as before ($M$ is a matrix with $m$ rows, $L=(\ell_1, \dots, \ell_m)$ is the list of degrees).
An element $x$ of $M$ is avoidable
if and only if
for every set $R$ of rows of $M$, we have:
	\[|E(R)| \geq  \sum_{r\in R} \ell_r .\]
\end{theorem}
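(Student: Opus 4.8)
The plan is to reduce the multi-matching statement to the matching statement via Lemma~\ref{multi} (and Lemma~\ref{lem:equivNotion}), exactly as was done for the extremal result. Recall the auxiliary matrix $M'$ with $\ell=\sum_r \ell_r$ rows, in which the $r$th row of $M$ is repeated $\ell_r$ times, the rows being indexed by the set $S$ of pairs $(\alpha,\beta)$ with $1\le\beta\le\ell_\alpha$. An element $x$ is avoidable in $(M,L)$ (in the multi-matching sense) if and only if some greedy POMM avoids $x$, which by Lemma~\ref{multi} happens if and only if some greedy matching of $M'$ avoids $x$, i.e.\ if and only if $x$ is avoidable in $M'$ in the ordinary (single) matching sense. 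So it suffices to apply Theorem~\ref{thm:unavoidable} to $M'$ and translate the resulting Hall-type condition back to $M$.

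First I would make the reduction precise: spell out that a multi-matching $\tau$ on $(M,L)$ selecting $x$ corresponds to a matching on $M'$ selecting $x$ (the rows of $M'$ coming from row $r$ pick exactly the $\ell_r$ positions that $\tau$ picks in row $r$), and conversely; and that ``avoidable'' is preserved since by Lemma~\ref{lem:equivNotion} it is equivalent to the existence of a $1$-POM(M)/greedy object avoiding $x$. Then Theorem~\ref{thm:unavoidable} says: $x$ is avoidable in $M'$ iff for every set $R'$ of rows of $M'$ we have $|E_x(R')|\ge |R'|$, where $E_x(R')$ is the set of elements lying left of $x$ in some row of $R'$ (with the convention that if $x$ is absent from a row, all its entries count as left of $x$). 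Note that copies of the same original row $r$ are identical as sequences, so $E_x(R')$ depends only on which original rows are represented in $R'$, not on how many copies of each.

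The second step is to show the two Hall conditions are equivalent. Given a set $R$ of rows of $M$, take $R'$ to be \emph{all} $\sum_{r\in R}\ell_r$ copies in $M'$ of the rows in $R$; then $E_x(R')=E(R)$ (in the notation of the theorem, $E(R)=E_x(R)$ computed in $M$) and $|R'|=\sum_{r\in R}\ell_r$, so the $M'$-condition for this $R'$ is exactly $|E(R)|\ge\sum_{r\in R}\ell_r$. This shows the $M'$-condition implies the $(M,L)$-condition. For the converse, note that for an \emph{arbitrary} $R'\subseteq S$, letting $R=\{\alpha:(\alpha,\beta)\in R'\text{ for some }\beta\}$ be the set of original rows represented, we have $|R'|\le\sum_{r\in R}\ell_r$ while $E_x(R')=E(R)$; hence the $(M,L)$-condition $|E(R)|\ge\sum_{r\in R}\ell_r$ gives $|E_x(R')|=|E(R)|\ge\sum_{r\in R}\ell_r\ge|R'|$, i.e.\ the full $M'$-condition holds. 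Combining, $x$ is avoidable in $(M,L)$ iff the $M'$-Hall condition holds iff $|E(R)|\ge\sum_{r\in R}\ell_r$ for all $R$, which is the claim.

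I do not expect a real obstacle here: the only point needing a little care is the bookkeeping around the convention ``if $x$ does not appear in $r$, all elements of $r$ are left of $x$,'' and the observation that duplicated rows contribute nothing new to $E_x$, so that passing to the worst case (all copies of each chosen original row) is the right move in the forward direction while passing to the represented original rows is the right move in the backward direction. One should also remark, as in the single-matching case, that one may assume $x$ occurs in every row without loss of generality (add $x$ at the end of any row not containing it), which makes ``left of $x$'' literal and keeps $E(R)=E_x(R)$ unambiguous; this does not change avoidability. The set version (replacing all elements of a set $X$ by a single element $x$, leftmost occurrence only) carries over verbatim just as after Theorem~\ref{thm:unavoidable}.
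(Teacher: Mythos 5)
Your proposal is correct and follows exactly the paper's route: the paper also proves this by invoking the row-duplication construction $M'$ of Lemma~\ref{multi} and noting that the Hall-type condition for $(M,L)$ is equivalent to the condition of Theorem~\ref{thm:unavoidable} applied to $M'$. The paper leaves that equivalence as ``clearly equivalent,'' whereas you spell out the bookkeeping (duplicated rows add nothing to $E_x$, so one passes between arbitrary $R'$ and full copy-sets of the represented rows), which is a faithful elaboration of the same argument.
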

\begin{proof}
This follows from Observation~\ref{multi}
and from the fact that the condition from this theorem for $(M, L)$
is clearly equivalent to the condition from Theorem~\ref{thm:unavoidable} for $M'$.
\end{proof}
Finally (back to POMs) we mention some miscellaneous results that give some insight into the structure of matrices with the biggest number of different exactly reachable sets.

\begin{lemma}
	Let $M$ be a matrix and $e_1,e_2,\ldots,e_k$ unavoidable elements. Further let $M'$ be the matrix obtained by deleting $e_1,e_2,\ldots,e_k$ in $M$ and appending $k$ new columns to the left of $M$. The $i$-th column  consists of the element $e_i$ in every entry.
	Let $E$ be an exactly reachable set of $M$, then it is an exactly reachable set also in $M'$. In particular $|\mathcal{E}(M')|\ge |\mathcal{E}(M)|$.
\end{lemma}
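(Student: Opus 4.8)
The plan is to show that every exactly reachable set $E$ of $M$ is also exactly reachable in $M'$, and then the inequality $|\mathcal{E}(M')| \ge |\mathcal{E}(M)|$ follows because the map $E \mapsto E$ is (trivially) injective on the family $\mathcal{E}(M)$. So the whole content is: if $E = s(\tau)$ for some POM $\tau$ of $M$, produce a POM $\tau'$ of $M'$ with $s(\tau') = E$.

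First I would record the key fact that since each $e_i$ is unavoidable in $M$, every POM of $M$ — in particular our fixed $\tau$ — selects all of $e_1,\dots,e_k$, so $\{e_1,\dots,e_k\} \subseteq E$. By Lemma~\ref{lem:equivNotion} it suffices to exhibit a $1$-POM of $M'$ whose image is exactly $E$, and the cleanest route is via greedy matchings: I would build a permutation $\pi'$ of the rows of $M'$ and show its greedy matching $\tau'_{\pi'}$ has image $E$. Since $\tau$ is a POM of $M$, by Lemma~\ref{lem:equivNotion} there is a permutation $\pi$ of the rows of $M$ with $s(\tau_\pi) = E$. Now in $M'$ the rows are the same rows, but each has the $k$ new prefix columns $e_1,\dots,e_k$ prepended and the occurrences of $e_1,\dots,e_k$ deleted from the body. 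Take $\pi'$ to be the very same order $\pi$. Running the greedy procedure on $M'$ with order $\pi'$: the first row processed picks $e_1$ (its leftmost element, since the new columns come first); after the first row, $e_1$ is taken, so the second row picks $e_2$, and so on — actually I need to be slightly more careful, as each row's leftmost still-available new-column element may differ. The right way to see it: the $k$ new columns force that among the first $k$ rows processed, the new elements $e_1,\dots,e_k$ get distributed — but more robustly, every row whose turn comes while some $e_j$ is unpicked will pick some such $e_j$ if nothing earlier in its body is forced. The clean statement is: after all $m$ rows are processed, the $k$ rows that came first in $\pi'$ have grabbed exactly $\{e_1,\dots,e_k\}$ (one each, in some order), and the remaining $m-k$ rows, when they run, encounter in their bodies exactly the same elements as in $M$ minus $\{e_1,\dots,e_k\}$, with the same left-to-right order; hence each picks the same element it picked under $\tau_\pi$ in $M$ — unless that element was one of $e_1,\dots,e_k$, in which case in $M'$ that row now picks its next available body element. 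But the rows of $\tau_\pi$ that picked an $e_i$ in $M$ are handled by noting that $s(\tau_\pi) = E \supseteq \{e_1,\dots,e_k\}$, so those $e_i$'s are covered by the new-column rows, and a small reshuffling/counting argument shows the total image is exactly $E$.

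The cleanest formalization, and the one I would actually write, avoids case analysis on which row grabs which $e_i$: process $\pi'$ as $k$ "dummy" initial steps that collectively consume $\{e_1,\dots,e_k\}$ and nothing else (true because the new columns are leftmost and contain only these $k$ elements, so the first $k$ distinct picks are exactly $e_1,\dots,e_k$), then observe the remaining process on $M'$ is isomorphic to the greedy process on the matrix $M \setminus \{e_1,\dots,e_k\}$ with order (the suffix of) $\pi$, whose picked set together with $\{e_1,\dots,e_k\}$ equals $s(\tau_\pi) = E$ because $E \supseteq \{e_1,\dots,e_k\}$. That gives a greedy matching of $M'$ with image $E$, hence a $1$-POM, hence by Lemma~\ref{lem:equivNotion} a POM $\tau'$ of $M'$ with $s(\tau') = E$, so $E \in \mathcal{E}(M')$. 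Since distinct sets stay distinct, $|\mathcal{E}(M')| \ge |\mathcal{E}(M)|$.

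The main obstacle is the bookkeeping in the middle step: making precise that "the greedy process on $M'$ after its first $k$ steps equals the greedy process on $M$ with the $e_i$'s deleted." One must be careful that deleting the $e_i$'s from a row does not change the relative order of the surviving elements (it doesn't, deletion preserves order) and that a row which under $\pi$ in $M$ would have picked some $e_i$ will, in $M'$, pick the leftmost surviving body element — and this still lands inside $E$, because all of $e_1,\dots,e_k$ are already accounted for by the dummy steps and $|E| = m$ forces the image to be exactly $E$ (no room for a stray element outside $E$, since every element the later rows can reach lies to the left of, or equals, something in $s(\tau_\pi)$, and an easy extremal/counting check pins the image down). I would phrase this last point using Observation~\ref{obs:triv} and the fact that $|s(\tau')| = m = |E|$ together with $s(\tau') \supseteq \{e_1,\dots,e_k\}$ and $s(\tau') \subseteq E$ (the latter because each later row's pick in $M'$ is either its $M$-pick or lies weakly left of it, hence in the "downward closure" which for a POM image equals the image), giving $s(\tau') = E$.
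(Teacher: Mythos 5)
Your overall strategy --- realize $E$ as the image of a greedy matching of $M'$ and invoke Lemma~\ref{lem:equivNotion} --- is the right one and is the paper's, but the permutation you commit to, namely $\pi'=\pi$ unchanged with its first $k$ entries serving as ``dummy steps'', does not work, and the two claims you use to finish (that the remaining process on $M'$ is isomorphic to the greedy process on $M$ minus $\{e_1,\dots,e_k\}$ run on the suffix of $\pi$, and that each later row's pick in $M'$ is its $M$-pick or lies weakly left of it, whence $s(\tau')\subseteq E$) are both false. Concretely, take $k=1$ and
\[
M=\left(\begin{array}{cc} a & b\\ e & c\end{array}\right),\qquad
M'=\left(\begin{array}{ccc} e & a & b\\ e & c & \end{array}\right),
\]
where $e$ is unavoidable and $E=\{a,e\}$ is reached by $\pi=(r_1,r_2)$. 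Running $\pi'=\pi$ on $M'$, row $r_1$ is diverted to the new column and picks $e$, so its genuine pick $a$ is never claimed; row $r_2$, whose $M$-pick was $e$, now picks $c\notin E$, strictly to the \emph{right} of its $M$-pick. The greedy image is $\{e,c\}\neq E$. The root cause is that the first $k$ rows of $\pi$ need not be the rows that selected $e_1,\dots,e_k$ under $\tau_\pi$: their picks in $E\setminus\{e_1,\dots,e_k\}$ get orphaned, while the displaced $e$-selecting rows wander outside $E$. Your phrase ``a small reshuffling argument'' names exactly the missing idea, but you never carry it out, and your ``cleanest formalization'' explicitly forgoes it.

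The paper's proof supplies precisely that reshuffle: define $\sigma$ from $\pi$ by moving the $k$ rows that selected $e_1,\dots,e_k$ under $\tau_\pi$ to the front, keeping the relative order of the remaining rows. Those $k$ rows then consume exactly $\{e_1,\dots,e_k\}$ from the new columns --- elements they were going to contribute to $E$ anyway --- and an easy induction along $\sigma$ shows every remaining row picks the same element as under $\pi$ in $M$: at its turn the set of already taken elements is its $M$-taken set enlarged only by $e_i$'s, which have been deleted from its body, while its $M$-pick (not an $e_i$) is still free and everything to its left is taken. With that single change the rest of your write-up goes through; in the example above, $\sigma=(r_2,r_1)$ indeed reaches $\{e,a\}$ in $M'$.
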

\begin{proof}
	Let $E$ be some exactly reachable set and $\pi$ some permutation for which $s(\tau_\pi)=E$. We modify $\pi$ to $\sigma$ by moving the rows that selected $e_1,e_2,\ldots,e_k$ to the beginning of the permutation. We claim that $\sigma$ selects $E$ in $M'$. Obviously, $e_1,\ldots,e_k$ are selected in the first $k$ steps. Afterwards, $\sigma$ behaves on $M'$ exactly as $\pi$ on $M$.
\end{proof}

This means that we can transform every matrix, such that all the unavoidable elements are in the first columns, without reducing the number of exactly reachable sets. We show another transformation that does not reduce the number of exactly reachable sets.

\begin{lemma}
	Let $M$ be a matrix, then there exists a matrix $M'$ such that for any row the element in the last reachable position of that row does not appear anywhere else in the matrix and $|\mathcal{E}(M')|\ge |\mathcal{E}(M)|$.
\end{lemma}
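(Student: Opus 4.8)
The plan is to make the rows ``good'' one at a time, where a row $r$ is \emph{good} if the entry $e_r$ at its last reachable position $p_r$ occurs nowhere else in the matrix. Pick a row $r$ that is not good and let $\hat M$ be obtained from $M$ by putting a brand new symbol $z$ (not occurring in $M$) at position $p_r$ and deleting all entries of row $r$ in the columns after $p_r$. I would show: (i) $|\mathcal{E}(\hat M)|\ge|\mathcal{E}(M)|$; (ii) in $\hat M$ the last reachable position of row $r$ is still $p_r$, and it now carries the unique symbol $z$; (iii) every row $r'\ne r$ has the same last reachable position in $\hat M$ as in $M$. Then $\hat M$ has one more good row than $M$, and iterating at most $m$ times produces the desired $M'$.

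\textbf{The injection for (i).} Call $E\in\mathcal{E}(M)$ of \emph{type A} if some POM $\tau$ with $s(\tau)=E$ has $\tau(r)\ne e_r$ (equivalently, $\tau$ selects in row $r$ a position strictly left of $p_r$); every $E$ with $e_r\notin E$ is of type A. Otherwise every POM reaching $E$ selects $p_r$, so $e_r\in E$; call such $E$ of \emph{type B}. Define $f(E)=E$ on type A and $f(E)=(E\setminus\{e_r\})\cup\{z\}$ on type B. Then $f$ is injective: it is injective within each class, and the fresh symbol $z$ tells the two families of images apart. For a type-A set $E$ with such a witness $\tau$: since $\hat M$ coincides with $M$ on every row other than $r$ and on all columns of row $r$ strictly left of $p_r$, the matching $\tau$ is still a matching of $\hat M$, and by Observation~\ref{obs:triv} no row can improve, so $\tau$ is a $1$-POM of $\hat M$ and $E\in\mathcal{E}(\hat M)$ by Lemma~\ref{lem:equivNotion}. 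For a type-B set $E$ with witness $\tau$ (so $\tau(r)=e_r$), let $\hat\tau$ agree with $\tau$ except that $\hat\tau(r)=z$; then $s(\hat\tau)=(E\setminus\{e_r\})\cup\{z\}$, and by Observation~\ref{obs:triv} every entry left of $\tau(r')$ in any row $r'$ lies in $E$. Hence $\hat\tau$ is a $1$-POM of $\hat M$ unless some row $r'$ can seize the now-freed $e_r$, i.e.\ unless $e_r$ occurs left of $\tau(r')$ in row $r'$. The crux is that this never happens.

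\textbf{Key Claim.} If $\tau$ is a POM of $M$ with $\tau(r)=e_r=M(r,p_r)$, where $p_r$ is the last reachable position of row $r$, then $e_r$ does not occur strictly to the left of $\tau(r')$ in any row $r'\ne r$.

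Granting the Key Claim, $\hat\tau$ is a $1$-POM of $\hat M$, so type-B images lie in $\mathcal{E}(\hat M)$, completing (i). Parts (ii) and (iii) follow by pushing the claim a bit further. If $\pi$ realizes $\tau$ as a greedy matching of $M$ (recall greedy matchings are exactly POMs), then — precisely because no $\tau(r')$ has $e_r$ to its left — running $\pi$ on $\hat M$ reproduces $\hat\tau$ verbatim, so $\hat\tau$ is a greedy matching, hence a POM, of $\hat M$; this yields (ii) and one direction of (iii). For the converse direction of (iii): a POM $\sigma$ of $\hat M$ selecting a position $q$ in a row $r'\ne r$ either avoids $z$, and is then already a POM of $M$, or uses $z$ (necessarily at $p_r$ in row $r$), in which case it cannot also use $e_r$ — otherwise, deleting row $r$ from a realizing permutation and re-appending it at the end would force row $r$ past $p_r$, contradicting that $p_r$ is last reachable — so re-pointing the $z$-value back to $e_r$ gives a POM of $M$ selecting $q$ in row $r'$. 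Thus the reachable positions of each $r'\ne r$ are unchanged.

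\textbf{Proof idea for the Key Claim, and the main obstacle.} Suppose $e_r$ sits at column $q''<q'$ in some row $r'$, where $q'$ is the column of $\tau(r')$. Take $\pi$ with $\tau_\pi=\tau$, let $\pi^\star$ be $\pi$ with row $r$ moved to the very end, and set $\sigma=\tau_{\pi^\star}$, a POM of $M$. In the $\pi^\star$-run the rows $\ne r$ act first, in their $\pi$-order; I would argue that afterwards all of $M(r,1),\dots,M(r,p_r-1)$ are selected (they were selected before row $r$'s turn in the $\pi$-run, by rows other than $r$) and, crucially, that $e_r$ is selected as well (with row $r$ out of the way, the first later row that sees $e_r$ to the left of its $\pi$-choice — in particular $r'$ — seizes it, since the entries left of $e_r$ in that row stay selected). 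Then, when row $r$ finally runs, $M(r,1),\dots,M(r,p_r)$ are all taken, so $\sigma$ selects a position past $p_r$ in row $r$; as $\sigma$ is a POM, this contradicts the choice of $p_r$. The delicate point — the main obstacle — is making rigorous that the greedy matching of the remaining rows still covers $e_r$ (rather than, say, dropping exactly $e_r$ while keeping all other elements of $s(\tau)$). I expect this to follow from a careful induction along the greedy process, tracking the displacement chain triggered by removing row $r$'s choice and using that the entries left of $\tau(r')$ remain covered; but it is the step on which the whole argument rests.
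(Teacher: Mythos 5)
Your proposal follows essentially the same route as the paper: process the rows one at a time, replace the entry at the last reachable position $p_r$ by a fresh symbol, and show that every exactly reachable set of $M$ survives either unchanged or with $e_r$ swapped for the fresh symbol; your Key Claim is precisely the claim the paper's proof hinges on. The step you flag as the main obstacle does go through, and by exactly the induction you sketch: running the greedy process for $\pi$ with row $r$ removed, one maintains the invariant that after any prefix of the remaining rows the set of selected elements equals the set selected in the original run minus one ``displaced'' element $f$, where initially $f=e_r$, and at each step either $f$ is not to the left of that row's original pick (the row picks the same element, $f$ unchanged) or it is (the row picks $f$, and $f$ becomes that row's original pick). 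Since no row other than $r$ selects $e_r$ in the original run, once $f$ changes it never returns to $e_r$, and the hypothesized row $r'$ forces it to change; hence all of $M(r,1),\dots,M(r,p_r)$ are taken when $r$ runs last, so the greedy (hence Pareto optimal) matching $\tau_{\pi^\star}$ pushes row $r$ past $p_r$, contradicting the choice of $p_r$. The paper proves the Key Claim differently, by exchanging $r$ with the first offending row $r'$ in $\pi$; this is shorter but more delicate (one must still verify that $r'$, run early, actually seizes $e_r$ rather than some element further left in its row that is still free at that earlier moment), and your move-to-the-end version sidesteps that issue. The remaining differences are cosmetic: your deletion of the entries to the right of $p_r$ and your explicit verification of (ii)--(iii) are more careful than the paper, which performs the substitution without deleting and leaves the preservation of last reachable positions across iterations implicit.
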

\begin{proof}
	We achieve one by one for each row that the element in the last reachable position in that row does not appear anywhere else in the matrix. Say, in a row $r$ the element $e$ in the last reachable position $p$ is not unique in $M$. We get $M'$ by simply changing $e$ to some new element $e'$ in position $p$. Now for some permutation $\pi$, $s(\tau_\pi)$ either remains the same in $M'$ and $M$ or not.
	
	In the latter case $\tau$ in $M$ must choose position $p$. We claim that it cannot happen that after choosing $p$, $\tau$ chooses a position in some other row which has element $e$ to its left. Indeed, in this case take the first such row $r'$ after $r$ in $\pi$. If we exchange $r$ and $r'$ in $\pi$, we get a permutation $\pi'$ such that the corresponding matching $\tau'$ chooses in $M$ the element next to the right to $p$. This is a contradiction as $p$ was a last reachable position. This implies that $\tau$ chooses exactly the same positions in $M$ and $M'$ and thus also the same elements, except that in $M'$ it chooses $e'$ instead of $e$.
	
	Summarizing, for every image set $I$ in $\mathcal{E}(M)$ either $I$ is also in $\mathcal{E}(M')$ or $I'$ is in $\mathcal{E}(M')$, where we get $I'$ by changing $e$ (which must be in $I$ in this case) to $e'$ in $I$. As $e'$ is a new element, for
	different image sets $I_1,I_2$, this way we get different
	image sets $I'_1,I'_2$, thus $|\mathcal{E}(M')|\ge |\mathcal{E}(M)|$.
\end{proof}


\vfill

\paragraph{Acknowledgments}
We want to thank Matthias Henze and Rafel Jaume
for posing Theorem~\ref{thm:elements} as an open problem.
We also want to thank Rob Irving, \'{A}gnes Cseh and David Manlove
for helping us to find related work to our problem.
Special thanks goes to Nieke Aerts
for enjoyable and interesting discussions
on attempts to improve Corollary~\ref{crl:mlogm_choose_m}.

We would also like to thank the annonymous reviewer
for helpful comments on the presentation and on
links to the literature.

We thank Nick Brettell for helpful comments on English grammar.

Research of Andrei Asinowski was supported by the
ESF EUROCORES programme EuroGIGA, CRP `ComPoSe',
Deutsche Forschungsgemeinschaft (DFG), grant FE 340/9-1.

Bal\'azs Keszegh is supported by the  Hungarian National Science Fund (OTKA),
under grant PD 108406 and by the J\'anos Bolyai Research Scholarship of the Hungarian Academy of Sciences.
This research was partially done while Bal\'{a}zs Keszegh was at FU Berlin
in the scope of an EuroGIGA Cross-CRP visit and later with a DAAD Study Visit Grant for Senior Academics.

The research of Tillmann Miltzow is partially supported by the ERC grant PARAMTIGHT: "Parameterized complexity and the search for tight complexity results", no. 280152.

\newpage

\bibliographystyle{plain}

\bibliography{CountingESMBib}
\end{document}